\theoremstyle{plain}
\newtheorem{thm}{\bf Theorem}[section]
\newtheorem{prop}[thm]{\bf Proposition}
\newtheorem{lem}[thm]{\bf Lemma}
\newtheorem{cor}[thm]{\bf Corollary}
\theoremstyle{definition}
\newtheorem{defn}[thm]{\bf Definition}
\theoremstyle{remark}
\newtheorem{rem}[thm]{\bf Remark}
\newtheorem{exam}[thm]{\bf Example}
\theoremstyle{example}
\def \pd{\mathrm{proj.dim}}
\def \dim{\mathrm{dim}}
\def \depth{\mathrm{depth}}
\def \1{\mathbf 1}
\def \x{\mathbf x}
\def \a{\mathbf a}
\def \u{\mathbf u}
\def \v{\mathbf v}
\def \z{\mathbf z}
\def \w{\mathbf w}
\def \s{\mathrm{sort}}
\def \rank{\mathrm{rank}}
\def \inl{\mathrm{in}}
\def \inlLs{\inl_{<^\sharp_{\Lex}}}
\def \inlls{\inl_{<^\sharp_{\lex}}}
\def \lex{\mathrm{lex}}
\def \Lex{\mathrm{Lex}}
\def \im{\mathrm{im}}
\def \supp{\mathrm{supp}}
\def \reg{\mathrm{reg}}
\def \NN{\mathbb N}
\def \ZZ{\mathbb Z}
\def \RR{\mathbb R}
\def \G{\mathcal G}
\def \B{\mathcal B}
\def \V{\mathcal V}
\def \A{\mathcal A}
\def \M{\mathcal M}
\begin{document}

\title{The behaviors of expansion functor on monomial ideals and toric rings}

\author{Rahim Rahmati-Asghar\footnote{Department of Mathematics, Faculty of Basic Sciences,
University of Maragheh, P. O. Box 55181-83111, Maragheh, Iran, and School of Mathematics, Institute for Research in Fundamental Sciences (IPM), P.O. Box 19395-5746, Tehran, Iran.
\url{rahmatiasghar.r@gmail.com}} and
Siamak Yassemi\footnote{School of Mathematics, Statistics and Computer Science, College of Science, University of Tehran, Tehran, Iran, and School of Mathematics, Institute for Research in Fundamental Sciences (IPM), P.O. Box 19395-5746, Tehran, Iran,
\url{yassemi@ut.ac.ir, yassemi@ipm.ir}}}

\maketitle

\abstract
In this paper we study some algebraic and combinatorial behaviors of
expansion functor. We show that on monomial ideals some properties like
polymatroidalness, weakly polymatroidalness and having linear
quotients are preserved under taking the expansion functor.

The main part of the paper is devoted to study of toric ideals
associated to the expansion of subsets of monomials which are minimal with respect
to divisibility. It is shown that,
for a given discrete polymatroid $P$, if toric ideal of $P$ is
generated by double swaps then toric ideal of any
expansion of $P$ has such a property. This result, in a special case, says that
White's conjecture is preserved under taking the expansion functor.
Finally, the construction of Gr\"{o}bner bases and some homological properties of toric ideals associated to expansions of subsets of monomials is investigated.\\ \\
Keywords: expansion functor, monomial ideal, toric ring, discrete polymatroid, White's conjecture\\ \\
2010 Mathematics Subject Classifications: 13C13, 13D02.

\section*{Introduction}

Let $S=K[x_1,\ldots,x_n]$ be a polynomial ring over a field $K$ and
let $I$ be a monomial ideal with the set of minimal generators $G(I)=\{\x^{\a_1},\ldots,\x^{\a_r}\}$ where $\x^{\a_i}=x^{\a_i(1)}_1\ldots
x^{\a_i(n)}_n$ for
$\a_i=(\a_i(1),\ldots,\a_i(n))\in\ZZ^n_+=\{\u=(u_1,\ldots,u_n)\in\ZZ^n:u_i\geq
0\}$. For the $n$-tuple $\alpha=(k_1,\ldots,k_n)\in\NN^n$, Bayati
and Herzog \cite{BaHe} defined the expansion of $I$ with respect to
$\alpha$ in the following form:

Let $S^\alpha=K[x_{11},\ldots,x_{1k_1},\ldots,x_{n1},\ldots,x_{nk_n}]$ be a polynomial ring over $K$
and set $P_j=(x_{j1},\ldots,x_{jk_j})$ a prime monomial ideal in
$S^\alpha$ for all $1\leq j\leq n$. The expansion of $I$ with
respect to $\alpha$, denoted by $I^\alpha$, is the monomial ideal
$$I^\alpha=\sum^r_{i=1}P^{\a_i(1)}_1\ldots P^{\a_i(n)}_n\subset S^\alpha$$
where $\a_i(j)$ is the $j$-th component of the vector $\a_i$.

Define the $K$-algebra homomorphism $\pi:S^\alpha\rightarrow S$ by
$\pi(x_{ij})=x_i$.

Let $\alpha=(k_1,\ldots,k_n)\in\NN^n$. For
$\u=(\u(1),\ldots,\u(n))\in\ZZ^n_+$, define $\u^\alpha$ the set of
$|\alpha|$-tuples $\w\in\ZZ^{|\alpha|}$ where $\x^\w\in
G((\x^\u)^\alpha)$. For example, if $\u=(1,2,0)$ and
$\alpha=(2,2,2)$ then
$$(\x^\u)^\alpha=(x_{11}x^2_{21}, x_{12}x^2_{21}, x_{11}x_{21}x_{22}, x_{11}x^2_{22}, x_{11}x_{21}x_{22}, x_{11}x_{21}x_{22}).$$
Therefore
\begin{center}
    $\u^\alpha=\{(1,0,2,0,0,0), (0,1,1,1,0,0), (1,0,1,1,0,0), (1,0,0,2,0,0),$\\
    $(1,0,1,1,0,0), (1,0,1,1,0,0)\}$.
\end{center}

For $\u,\v\in\ZZ^n_+$, $\u\preceq\v$ means that $\u(i)\leq\v(i)$ for all $i$. We write $\u\prec\v$ if $\u\preceq\v$ and $\u\neq\v$. If $\V$ is a set of vectors in $\ZZ^n_+$ which is minimal with respect
to $\preceq$, then $\V^\alpha=\underset{\u\in\V}{\bigcup}\u^\alpha$. Also, for a set $\A$ of monomials in $S$ which is minimal with respect to divisibility, we define $\A^\alpha=\underset{\x^\u\in\A}{\bigcup}G((\x^\u)^\alpha)$.

It is easy to see that for a monomial ideal $I\subset S$, $G(I^\alpha)=\{\x^\w:\x^\w\in G(I)^\alpha\}$.

In \cite{BaHe} the authors defined the expansion functor in the
category of finitely generated multigraded $S$-modules and studied
some homological behaviors of this functor.
In this paper, we consider a subset $\A$ of monomials which are minimal
with respect to divisibility and we study some combinatorial and homological properties on monomial ideals
generated by expansions of $\A$ and also toric ideals related to
them. Actually, we investigate some properties on $K[\A^\alpha]$ or $I_{\A^\alpha}$ when it holds for $K[\A]$ or $I_\A$. The paper is written in two main sections. One section is devoted to study of some behaviors of expansion functor on monomial ideals and the other one is on toric ideals related to expansions of subsets of monomials.

In Section 1, we show that some properties like
polymatroidalness (Theorem \ref{poly}), weakly polymatroidalness
(Theorem \ref{expan w.p.}) and having linear quotients (Theorem \ref{linear qu}) are preserved under taking the expansion functor.

In Section 2, we discuss several combinatorial and algebraic properties of expansion functor on toric algebras. White \cite{Wh} conjectured that for a matroid $\M$, the toric ideal $I_{\M}$ is generated by quadrics corresponding to double swaps. On the other hand, matroids are a special subclass of discrete polymatroids, defined in \cite{HH}. Herzog and Hibi \cite{HH} conjectured that for a discrete polymatroid $P$, the toric ideal $I_P$ is generated by quadrics corresponding to double swaps, too. We will show that for $\alpha\in\NN^n$, when the toric ideal associated to $P$ is generated by quadrics corresponding to double swaps then the toric ideal associated to $P^\alpha$ is (Theorem \ref{main}). As an application we show that if White's conjecture holds for a matroid $\M$ then it does for any expansion of $\M$. We show that the toric ring $K[\A]$ is normal and Koszul if the toric ring $K[\A^\alpha]$ is normal and Koszul. We also conclude that the reduced Gr\"{o}bner basis of $I_\A$ is the intersection of the reduced Gr\"{o}bner basis of $I_{\A^\alpha}$ with $K[\A]$. In Theorem \ref{Grob of exp}, the construction of the reduced Gr\"{o}bner basis of $I_{\A^\alpha}$ is described whenever the reduced Gr\"{o}bner basis of $I_\A$ is given. Then we show that a set of monomials is sortable if and only if its expansion is sortable(Theorem \ref{sort}). Combining this result and a result due to Sturmfels \cite{St} we conclude that if $\A$ (resp. its expansion $\A^\alpha$) is sortable then $I_{A^\alpha}$ (resp. $I_\A$) has a Gr\"{o}bner basis consisting of the quadratic sorting
relations.

We show that the toric ring of a set $\A$ of monomials is normal if and only if the toric ring of an expansion of $\A$ is (Theorem \ref{normal}). Finally, we describe some homological relations as Krull dimension, depth, projective dimension and Castelnuovo-Mumford regularity between $K[\A]$ and $K[\A^\alpha]$ (Theorem \ref{subring dim}).

\section{The expansion of some classes of monomial ideals}

In this section we show that the expansion functor has well behavior on monomial ideals
with respect to the properties of polymatroidalness, weakly
polymatroidalness and having linear quotients. In other words, we
prove that a monomial ideal has one of the mentioned properties if
and only its expansion has the same property.

\begin{defn}
(\cite{HH}) A monomial ideal $I$ of $S$ is called \emph{polymatroidal}
if it satisfies the following conditions:
\begin{enumerate}[\upshape (i)]
  \item all elements in $G(I)$ have the same degree;
  \item if $u=x^{a_1}_1\ldots x^{a_n}_n$ and $v=x^{b_1}_1\ldots x^{b_n}_n$ belong to $G(I)$ with $a_i>b_i$, then there exists $j$ with $a_j<b_j$ such that $x_j(u/x_i)\in G(I)$.
\end{enumerate}
\end{defn}

\begin{thm}\label{poly}
Let $\alpha\in\NN^n$. The monomial ideal $I\subset S$ is
polymatroidal if and only if $I^\alpha$ is polymatroidal.
\end{thm}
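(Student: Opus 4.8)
The plan is to verify the two defining conditions of polymatroidality separately and in both directions, exploiting the explicit description $G(I^\alpha) = G(I)^\alpha = \bigcup_{\x^\u \in G(I)} G((\x^\u)^\alpha)$ together with the projection $\pi : S^\alpha \to S$, $\pi(x_{jl}) = x_j$. Condition (i) is immediate: every generator $\x^\w \in G((\x^\u)^\alpha)$ satisfies $\deg \x^\w = \deg \x^\u$, because expansion only redistributes the exponent of $x_j$ among $x_{j1}, \dots, x_{jk_j}$ without changing its total; hence $G(I)$ is generated in a single degree $d$ if and only if $G(I^\alpha)$ is. The useful consequence is that $I^\alpha$ is then generated in degree $d$, so every degree-$d$ monomial lying in $I^\alpha$ is automatically a minimal generator, which lets me certify membership in $G(I^\alpha)$ simply by checking the degree and the block-sum vector $\u(j) = \sum_l c_{jl}$ of the exponents $c_{jl}$ on $x_{jl}$.

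For the forward direction (if $I$ is polymatroidal then so is $I^\alpha$), I take two generators $w, w'$ with exponent matrices $(c_{jl})$ and $(c'_{jl})$ and a variable $x_{st}$ with $c_{st} > c'_{st}$, writing $\x^\u = \pi(w)$, $\x^{\u'} = \pi(w')$. I split into two cases according to the block sums. If $\u(s) > \u'(s)$, I apply the exchange property of $I$ to $\x^\u, \x^{\u'}$ to obtain an index $s'$ with $\u(s') < \u'(s')$ and $x_{s'}(\x^\u/x_s) \in G(I)$; since $\sum_l c_{s'l} < \sum_l c'_{s'l}$ I may pick a sub-index $t'$ with $c_{s't'} < c'_{s't'}$, and then $x_{s't'}(w/x_{st})$ has block-sum vector $x_{s'}(\x^\u/x_s)$, hence is a generator. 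If instead $\u(s) \le \u'(s)$, the inequality $c_{st} > c'_{st}$ forces some other sub-index $t'$ in block $s$ with $c_{st'} < c'_{st'}$, and the intra-block swap $x_{st'}(w/x_{st})$ leaves the block sums, hence the $\pi$-image, unchanged and so is again a generator. In either case the exchanged variable has strictly smaller exponent in $w$ than in $w'$, which is condition (ii) for $I^\alpha$.

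For the converse, I lift each $\x^\u \in G(I)$ canonically to $w = \prod_j x_{j1}^{\u(j)} \in G(I^\alpha)$, supported entirely in the first sub-index of each block. Given $\x^\u, \x^{\u'} \in G(I)$ with $\u(s) > \u'(s)$, the variable $x_{s1}$ has larger exponent in $w$ than in $w'$, so the exchange property of $I^\alpha$ yields a variable with strictly smaller exponent in $w$ than in $w'$ whose swap into $w$ remains in $G(I^\alpha)$. Because $w$ and $w'$ are supported only in sub-index $1$, every $x_{jl}$ with $l \ne 1$ has exponent $0$ in both and therefore cannot be the exchanged variable; it is forced to be $x_{s'1}$ with $\u(s') < \u'(s')$, and applying $\pi$ to $x_{s'1}(w/x_{s1}) \in G(I^\alpha)$ gives $x_{s'}(\x^\u/x_s) \in G(I)$, as needed.

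I expect the main obstacle to be the bookkeeping in the forward direction, specifically the realization that the variable-level inequality $c_{st} > c'_{st}$ need not come from a block-level inequality $\u(s) > \u'(s)$: the case $\u(s) \le \u'(s)$ cannot invoke the polymatroidality of $I$ at all and must be settled by an exchange internal to block $s$. The converse is comparatively rigid once the first-coordinate lifts are fixed, since that support condition pins down the exchanged variable and makes the correspondence with an exchange in $I$ transparent.
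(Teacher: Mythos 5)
Your proposal is correct and follows essentially the same route as the paper: in the forward direction the same case split on the block sums $\u(s)$ versus $\u'(s)$ (intra-block swap when $\u(s)\le\u'(s)$, lift of an exchange in $I$ when $\u(s)>\u'(s)$), and in the converse the same canonical lift to the first sub-index of each block. Your added justification that the exchanged variable in the converse must lie in sub-index $1$ is a detail the paper leaves implicit, but the argument is the same.
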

\begin{proof}
``Only if part'': Let $I$ be polymatroidal and let
$\alpha=(k_1,\ldots,k_n)$. Let
\begin{center}
$u=x^{a_{11}}_{11}\ldots x^{a_{1k_1}}_{1k_1}\ldots x^{a_{n1}}_{n1}\ldots x^{a_{nk_n}}_{nk_n}$
and $v=x^{b_{11}}_{11}\ldots x^{b_{1k_1}}_{1k_1}\ldots
x^{b_{n1}}_{n1}\ldots x^{b_{nk_n}}_{nk_n}$
\end{center}
be two monomials in
$G(I^\alpha)$ with $a_{ij}>b_{ij}$. Set $a_i=\sum_ja_{ij}$ and
$b_i=\sum_jb_{ij}$. If $a_i\leq b_i$ then there exists some $j'$
such that $a_{ij'}<b_{ij'}$. Therefore it is clear that $x_{ij'}(u/x_{ij})\in
G(I^\alpha)$ and the assertion holds. So suppose that
$a_i>b_i$. Then there exists $k$ with $a_k<b_k$ such that
$x_k(\pi(u)/x_i)\in G(I)$. In particular, $a_{k}<b_k$ implies that there is $t$
with $a_{kt}<b_{kt}$. Therefore $x_{kt}(u/x_{ij})\in G(I^\alpha)$
which is desired assertion.

``If part'': Let $I^\alpha$ be polymatroidal and let
$u=x^{a_1}_1\ldots x^{a_n}_n$ and $v=x^{b_1}_1\ldots x^{b_n}_n$ with
$a_i>b_i$ be two monomials of $G(I)$. Then $u'=x^{a_1}_{11}\ldots
x^{a_n}_{n1}$ and $v'=x^{b_1}_{11}\ldots x^{b_n}_{n1}$ are two
monomials in $G(I^\alpha)$. Hence there is $j$ with $a_j<b_j$ such
that $x_{j1}(u'/x_{i1})\in G(I^\alpha)$. Therefore $x_j(u/x_i)\in G(I)$.
\end{proof}

\begin{defn}
(\cite{HK,MM}) A monomial ideal $I$ is called \emph{weakly
polymatroidal with respect to the ordering} $x_1>\ldots>x_n$ if for every two monomials $u=x^{a_1}_1\ldots
x^{a_n}_n$ and $v=x^{b_1}_1\ldots x^{b_n}_n$ in $G(I)$ such that
$a_1=b_1,\ldots,a_{t-1}=b_{t-1}$ and $a_t>b_t$, there exists $j>t$
such that $x_t(v/x_j)\in I$.

We say that a monomial ideal $I\subset S$ is weakly polymatroidal if it is weakly polymatroidal with respect to some ordering of variables $x_1,\ldots,x_n$.
\end{defn}

It is clear from the definition that a polymatroidal ideal is weakly polymatroidal but the converse is not true in general(see \cite[Example 1.3]{HK}).

\begin{thm}\label{expan w.p.}
Let $\alpha\in\NN^n$. The monomial ideal $I\subset S$ is weakly
polymatroidal if and only if $I^\alpha\subset S^\alpha$ is weakly
polymatroidal.
\end{thm}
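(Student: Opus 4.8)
The plan is to mirror the strategy used for the polymatroidal case in Theorem \ref{poly}, namely to prove both directions by translating the defining inequalities on exponent vectors back and forth between $S$ and $S^\alpha$ along the projection $\pi$. The key conceptual point is that the weakly polymatroidal condition is tied to a fixed ordering of the variables, so the first thing I must do is fix a sensible ordering on the expanded variables $x_{ij}$ induced from the ordering on the original variables. If $I$ is weakly polymatroidal with respect to $x_1 > \cdots > x_n$, I propose to order the variables of $S^\alpha$ lexicographically by the block structure, i.e. $x_{11} > \cdots > x_{1k_1} > x_{21} > \cdots > x_{2k_2} > \cdots > x_{n1} > \cdots > x_{nk_n}$, so that the blocks respect the original order and within each block the second index increases. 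The heuristic is that ``comparing $u$ and $v$ up to the first place where they differ'' in $S^\alpha$ should reduce, after applying $\pi$, to comparing $\pi(u)$ and $\pi(v)$ up to the first block where they differ.

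For the ``only if'' direction I would take $u, v \in G(I^\alpha)$ with $a_{11} = b_{11}, \ldots$ agreeing up to some first position $(i,j)$ where $a_{ij} > b_{ij}$, and I would consider the induced sums $a_i = \sum_j a_{ij}$ and $b_i = \sum_j b_{ij}$ over the block. The dichotomy will be analogous to the polymatroidal proof: either the strict inequality $a_{ij} > b_{ij}$ can be compensated within the same block $i$ by some later coordinate $(i,j')$ with $j' > j$ and $a_{ij'} < b_{ij'}$, in which case $x_{ij}(v/x_{ij'}) \in I^\alpha$ directly; or the block sums already satisfy $a_i > b_i$, and since the earlier blocks agree coordinatewise (hence have equal sums), the original monomials satisfy $\pi(u), \pi(v) \in G(I)$ with $a_1 = b_1, \ldots, a_{i-1} = b_{i-1}$ and $a_i > b_i$. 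The weakly polymatroidal property of $I$ then yields $k > i$ with $x_i(\pi(v)/x_k) \in I$, and I must lift this to the statement that some $x_{kt}$ with $b_{kt} > 0$ can be swapped, giving $x_{ij}(v/x_{kt}) \in I^\alpha$.

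The ``if'' direction should again be the easy one, following the template of Theorem \ref{poly}: given $u = x_1^{a_1}\cdots x_n^{a_n}$ and $v = x_1^{b_1}\cdots x_n^{b_n}$ in $G(I)$ agreeing in the first $t-1$ coordinates with $a_t > b_t$, I would embed them as $u' = x_{11}^{a_1}\cdots x_{n1}^{a_n}$ and $v' = x_{11}^{b_1}\cdots x_{n1}^{b_n}$, which lie in $G(I^\alpha)$ and, under my chosen variable order, first differ precisely at the block-leading variable $x_{t1}$. Applying weak polymatroidalness of $I^\alpha$ to $u', v'$ produces a swap $x_{t1}(v'/x_{j1}) \in I^\alpha$, and projecting by $\pi$ recovers $x_t(v/x_j) \in I$ with $j > t$.

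The main obstacle I anticipate is in the ``only if'' direction, in the step that transfers the block-sum inequality back up to the expansion. Two points need care: first, I must justify that $\pi(u)$ and $\pi(v)$ are genuinely in $G(I)$ and not merely in $I$ — this uses the identity $G(I^\alpha) = \{\x^\w : \x^\w \in G(I)^\alpha\}$ stated in the excerpt, so that generators of $I^\alpha$ come exactly from generators of $I$. Second, and more delicately, after obtaining $x_i(\pi(v)/x_k) \in I$ I must choose the specific expanded variable $x_{kt}$ to divide $v$ (so that $v/x_{kt}$ makes sense, requiring $b_{kt} > 0$) and verify that the resulting monomial $x_{ij}(v/x_{kt})$ actually lies in $G((\text{the relevant generator})^\alpha) \subseteq I^\alpha$ rather than just in $I^\alpha$ after clearing; establishing membership at the level of the expansion, using the description of $(\x^\u)^\alpha$ as all monomials of the correct multidegree in each block, is where the real combinatorial content lives. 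I expect the verification that raising the $i$-block degree by one and lowering the $k$-block degree by one keeps the monomial inside the expansion of a bona fide generator to require the explicit structure of $\u^\alpha$ recalled in the introduction.
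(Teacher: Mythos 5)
Your ``only if'' direction is essentially the paper's argument and is sound: with the block ordering $x_{11}>\cdots>x_{1k_1}>\cdots>x_{n1}>\cdots>x_{nk_n}$, your dichotomy (either some $x_{ij'}$ with $j'>j$ divides $v$ and compensates inside block $i$, or else $a_i>b_i$ and one descends to $\pi(u),\pi(v)\in G(I)$) is exhaustive, and the lifting step you flag as delicate amounts to the observation that any monomial $M'$ of $S^\alpha$ with $\pi(M')\in I$ lies in $I^\alpha$, which follows directly from $I^\alpha=\sum_i P_1^{\a_i(1)}\cdots P_n^{\a_i(n)}$. That half needs only this lemma written out.

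The genuine gap is in the ``if'' direction. Being weakly polymatroidal means weakly polymatroidal with respect to \emph{some} ordering of the variables, so the hypothesis on $I^\alpha$ hands you an arbitrary ordering of the $x_{ij}$, which need not be your block ordering and need not even keep the variables of a block consecutive. Your argument applies the exchange property of $I^\alpha$ to $u'=x_{11}^{a_1}\cdots x_{n1}^{a_n}$ and $v'=x_{11}^{b_1}\cdots x_{n1}^{b_n}$ ``under my chosen variable order,'' i.e.\ it tacitly assumes the witnessing order for $I^\alpha$ is the induced block order; as written you have only proved that weak polymatroidality of $I^\alpha$ \emph{with respect to that particular order} forces weak polymatroidality of $I$, which is weaker than the claimed converse. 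To close the gap you must start from the given order $x_{i_1j_1}>\cdots>x_{i_{|\alpha|}j_{|\alpha|}}$, define an order on $x_1,\ldots,x_n$ by applying $\pi$ and deleting repeated variables from the left, and embed $u,v$ using as representative of block $s$ the \emph{first} variable $x_{st_s}$ of that block occurring in the given order (not necessarily $x_{s1}$). Then $u'$ and $v'$ are supported only on representatives, their first disagreement occurs at a representative, and the swap variable returned by the exchange property must divide $v'$, hence is itself a later representative and projects to a legitimate swap for $I$. This is exactly the extra construction the paper's proof carries out and your sketch omits.
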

\begin{proof}
``Only if part'': Suppose that $I$ is weakly polymatroidal with
respect to the ordering $x_1>\ldots>x_n$. Let
$\alpha=(k_1,\ldots,k_n)$. We want to show that $I^\alpha$ is weakly
polymatroidal with respect to the ordering
$x_{11}>\ldots>x_{1k_1}>\ldots>x_{n1}>\ldots>x_{nk_n}.$
Let $u=x^{a_{11}}_{11}\ldots x^{a_{1k_1}}_{1k_1}\ldots x^{a_{n1}}_{n1}\ldots x^{a_{nk_n}}_{nk_n}$ and $v=x^{b_{11}}_{11}\ldots x^{b_{1k_1}}_{1k_1}\ldots
x^{b_{n1}}_{n1}\ldots x^{b_{nk_n}}_{nk_n}$ be two monomials in $G(I^\alpha)$ such that one of the following properties holds:

\begin{enumerate}[\upshape (i)]
  \item $a_{11}=b_{11},\ldots,a_{s-1\ k_{s-1}}=b_{s-1\ k_{s-1}}$ and $a_{s1}>b_{s1}$.
  \item $a_{11}=b_{11},\ldots,a_{s\ t-1}=b_{s\ t-1}$ and $a_{st}>b_{st}$.
\end{enumerate}

Set $a_i=\sum_ja_{ij}$ and $b_i=\sum_jb_{ij}$. Suppose (i) holds. We
have two cases:

Case 1. Let $x_s|\pi(v)$. If $k_s=1$, then there is $t>s$ such that
$x_s(\pi(v)/x_t)\in I$. So $x_{s1}(v/x_{tl})\in I^\alpha$ for some
$l$. So assume that $k_s>1$. If $b_s=1$, then there is $t'>s$ such that
$x_s(\pi(v)/x_{t'})\in I$ and so $x_{s1}(v/x_{t'l})\in I^\alpha$ for
some $l$. If $b_s>1$, then by the definition of expansion of an
ideal there is $p>1$ such that $x_{sp}|v$ and clearly $x_{s1}(v/x_{sp})\in
I^\alpha$.

Case 2. Let $x_s\nmid \pi(v)$. Since $I$ is weakly polymatroidal,
there is $k>s$ such that $x_s(\pi(v)/x_k)\in I$. Thus
$x_{s1}(v/x_{kl})\in I^\alpha$ for some $l$.

Suppose (ii) holds. If there is $t'>t$ such that $x_{st'}|v$, then
it is clear that $x_{st}(v/x_{st'})\in I^\alpha$. Assume $x_{st'}\nmid v$, for all
$t'>t$. Then $a_s>b_s$ and since $I$ is weakly
polymatroidal we have $x_{s}(\pi(v)/x_k)\in I$ for some $k>s$. This
implies that $x_{st}(v/x_{kl})\in I^\alpha$ for some $l$.

Therefore $I^\alpha$ is weakly polymatroidal.

``If part'': Suppose $I^\alpha$ is weakly polymatroidal with respect to
the ordering
\begin{equation}\label{1}
x_{i_1j_1}>\ldots>x_{i_{|\alpha|}j_{|\alpha|}}
\end{equation}
which for all $l$, $1\leq i_l\leq n$ and $1\leq j_l\leq k_{i_l}$. We will show that $I$ is weakly polymatroidal with respect to the
ordering $x_{s_1}>\ldots>x_{s_n}$ obtained from the ordering (\ref{1})
after applying the $K$-algebra homomorphism $\pi:S^\alpha\rightarrow S$ by $\pi(x_{ij})=x_i$ and removing the repeated variables
beginning on the left-hand. In other words, if
$$x_{i_1}\geq\ldots\geq x_{i_p}\geq\ldots\geq x_{i_q}\geq\ldots\geq x_{i_{|\alpha|}}$$
where $i_p=i_q$ then we will remove $x_{i_q}$. Let $x_{s_l}=\pi(x_{s_lt_l})$ for all
$l=1,\ldots,n$. Let $u=x^{a_1}_{s_1}\ldots
x^{a_n}_{s_n},v=x^{b_1}_{s_1}\ldots x^{b_n}_{s_n}\in G(I)$ with
$a_1=b_1,\ldots,a_{j-1}=b_{j-1}$ and $a_j>b_j$. Then
$u'=x^{a_1}_{s_1t_1}\ldots
x^{a_n}_{s_nt_n}>v'=x^{a_1}_{s_1t_1}\ldots x^{a_n}_{s_nt_n}$ are in
$G(I^\alpha)$ and so there exists $k>j$ with $a_k<b_k$ such that
$x_{s_jt_j}(v'/x_{s_kt_k})\in I^\alpha$. Thus $x_{s_j}(v/x_{s_k})\in
I$. This concludes that $I$ is weakly polymatroidal.
\end{proof}

Now we study the behavior of expansion functor on the property of having linear
quotients. First, we recall some notations and definitions from \cite{RaYa}:

For the monomial $u=x^{a_1}_1\ldots x^{a_n}_n$ in $S$, we will denote the \emph{support} of $u$ by
$\supp(u)$ and it is the set of those integers $i$ that $a_i\neq 0$. Set
$\nu_{x_i}(u):=a_i$. When
$M$ is another monomial, we set $[u,M]=1$ if for all
$i\in\supp(u)$, $x^{a_i}_i\nmid v$. Otherwise we set $[u,M]\neq
1$.

For the monomial $u\in S$ and the monomial ideal $I\subset S$ set
\begin{center}
$I^u=\langle M_i\in G(I): [u,M_i]\neq 1\rangle$\quad and\quad
$I_u=\langle M_i\in G(I): [u,M_i]=1\rangle.$
\end{center}

Let the minimal system of generators of $I$ be
$G(I)=\{M_1,\ldots,M_r\}$. The monomial $u=x^{a_1}_1\ldots
x^{a_n}_n$ is called \emph{shedding} if $I_u\neq 0$ and for each
$M_i\in G(I_u)$ and each $l\in\supp(u)$ there exists $M_j\in G(I^u)$
such that $M_j:M_i=x_l$.

\begin{defn}
(\cite{RaYa}) Let $I$ be a monomial ideal minimally generated by
$\{M_1,\ldots,M_r\}$. We say $I$ is a \emph{$k$-decomposable} ideal
if $r=1$ or else has a shedding monomial $u$ with $|\supp(u)|\leq
k+1$ such that the ideals $I^u$ and $I_u$ are $k$-decomposable.
\end{defn}

For each monomial ideal $I$ and a system of minimal generators
$u_1,\ldots,u_r$ of $I$, we say that $I$ has \emph{linear quotients
with respect to the ordering} $u_1,\ldots,u_r$ if for all
$j=2,\ldots,r$, the colon ideal $(u_1,\dots,u_{j-1}):(u_j)$ is
generated by linear forms. In other words, if for all $j<i$ there exists an integer $k<i$ and an integer $l$ such
that
$$\frac{u_k}{\gcd(u_k,u_i)}=x_l|\frac{u_j}{\gcd(u_j,u_i)}.$$
A monomial ideal which has linear quotients with respect to some ordering of minimal generators, is called a monomial ideal with linear quotients.

It is known that a weakly polymatroidal ideal has linear quotients. In \cite{RaYa} the authors proved that

\begin{thm}\label{decompos}
(\cite[Theorem 2.13]{RaYa}) A monomial ideal has linear quotients if and only if it is $k$-decomposable for some
$k\geq 0$.
\end{thm}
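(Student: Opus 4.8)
The plan is to prove both implications by induction on the number $r=|G(I)|$ of minimal generators, exploiting the fact that for any monomial $u$ the minimal generators split as a disjoint union $G(I)=G(I^u)\sqcup G(I_u)$, where $I^u$ collects the generators $M$ with $[u,M]\neq 1$ and $I_u$ those with $[u,M]=1$. The base case $r=1$ is trivial in both directions, since a principal monomial ideal is $0$-decomposable and has linear quotients.

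For the implication ``$k$-decomposable $\Rightarrow$ linear quotients'', I would take a shedding monomial $u$ witnessing decomposability, so that $I^u$ and $I_u$ are $k$-decomposable and, by induction, each has linear quotients. I would then order $G(I)$ by listing the generators of $I^u$ first (in a linear-quotients order of $I^u$) followed by those of $I_u$ (in a linear-quotients order of $I_u$), and verify the linear-quotients condition generator by generator. The only nontrivial case is a generator $M_i\in G(I_u)$ against an earlier $M_j\in G(I^u)$: since $[u,M_j]\neq 1$ there is $l\in\supp(u)$ with $\nu_{x_l}(M_j)\geq\nu_{x_l}(u)$, while $[u,M_i]=1$ forces $\nu_{x_l}(M_i)<\nu_{x_l}(u)$, so $x_l$ divides $M_j/\gcd(M_j,M_i)$; the shedding hypothesis then supplies $M'\in G(I^u)$, occurring earlier, with $M'/\gcd(M',M_i)=x_l$, which is exactly the linear generator of the colon ideal required at this step.

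For the converse ``linear quotients $\Rightarrow$ $k$-decomposable'', I would fix a linear-quotients order $u_1,\ldots,u_r$ and peel off the last generator. Writing the colon ideal $(u_1,\ldots,u_{r-1}):(u_r)$ in its (by hypothesis linear) minimal generators $x_{l_1},\ldots,x_{l_s}$, I would set $u=\prod_{t=1}^s x_{l_t}^{\nu_{x_{l_t}}(u_r)+1}$. The exponent inflation guarantees $[u,u_r]=1$, while the fact that each $g_j=u_j/\gcd(u_j,u_r)$ is divisible by some $x_{l_t}$ guarantees $[u,u_j]\neq 1$ for every $j<r$; hence $I_u=(u_r)$ and $I^u=(u_1,\ldots,u_{r-1})$. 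Moreover each $x_{l_t}$ arises as $u_k:(u_r)$ for some $k<r$, which is precisely the shedding condition for $u$ relative to the single generator $u_r$. Thus $u$ is shedding, $I_u$ is principal (hence $0$-decomposable), and $I^u$ has linear quotients with one fewer generator under the restricted order, so induction applies. Taking $k=n-1$ makes the support bound $|\supp(u)|\leq k+1$ vacuous, so $I$ is in fact $(n-1)$-decomposable.

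The main obstacle is the reverse direction: the correct shedding monomial is not one of the generators but the auxiliary monomial built from the colon variables with inflated exponents, and the crux is checking simultaneously that it separates $u_r$ from all earlier generators (so that the decomposition is genuinely $(u_1,\ldots,u_{r-1})$ versus $(u_r)$) and that it satisfies the shedding condition. In the forward direction the only delicate point is the cross-case above, where the shedding hypothesis is used to convert a difference in exponents at a coordinate of $\supp(u)$ into an honest linear generator of the colon ideal.
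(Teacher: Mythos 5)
The paper does not actually prove this statement: it is imported verbatim as \cite[Theorem 2.13]{RaYa} and used as a black box, so there is no internal proof to compare against. Judged on its own, your argument is correct and complete. In the forward direction the concatenated ordering ($G(I^u)$ first, then $G(I_u)$) works, and the only cross case is exactly the one you isolate: for $M_i\in G(I_u)$ and earlier $M_j\in G(I^u)$, the inequality $\nu_{x_l}(M_j)\geq\nu_{x_l}(u)>\nu_{x_l}(M_i)$ puts $x_l$ into $M_j/\gcd(M_j,M_i)$, and the shedding hypothesis realizes $x_l$ as $M'/\gcd(M',M_i)$ for an earlier $M'$. In the reverse direction the exponent inflation $u=\prod_t x_{l_t}^{\nu_{x_{l_t}}(u_r)+1}$ is genuinely needed (with $\prod_t x_{l_t}$ one could not guarantee $[u,u_r]=1$), and your verification that each $x_{l_t}$, being a minimal generator of the colon ideal, equals $u_j:u_r$ for some $j<r$ is precisely what makes $u$ shedding; passing to the uniform bound $k=n-1$ disposes of the support condition. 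This is, in substance, the same induction-on-$|G(I)|$ strategy used in the cited source, so nothing further is required.
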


We will use this theorem in the following.

\begin{thm}\label{linear qu}
Let $\alpha\in\NN^n$. The monomial ideal $I\subset S$ has linear
quotients if and only if $I^\alpha\subset S^\alpha$ has linear
quotients.
\end{thm}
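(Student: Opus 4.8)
The plan is to establish both implications by explicit constructions of ordered minimal generating sets, exactly paralleling the two-sided structure of Theorems \ref{poly} and \ref{expan w.p.}. Throughout write $G(I)=\{u_1,\dots,u_r\}$ and recall that $G(I^\alpha)=\bigcup_{i=1}^r G((u_i)^\alpha)$; since every element of $G((u_i)^\alpha)$ maps to $u_i$ under $\pi$, distinct blocks $G((u_i)^\alpha)$ are disjoint, so this is a genuine partition. A repeatedly used observation is that each principal ideal $(u_i)$ is trivially polymatroidal, so by Theorem \ref{poly} the ideal $(u_i)^\alpha=P_1^{\nu_{x_1}(u_i)}\cdots P_n^{\nu_{x_n}(u_i)}$ is polymatroidal, hence weakly polymatroidal, hence has linear quotients; I fix once and for all a linear-quotients ordering of each block.

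For the \emph{only if} part, assume $I$ has linear quotients with respect to $u_1,\dots,u_r$, and order $G(I^\alpha)$ by listing the blocks in the order $1,2,\dots,r$ and ordering each block internally by its fixed linear-quotients ordering. I must verify the linear-quotient condition for a pair $w'<w$ with $w\in G((u_i)^\alpha)$ and $w'\in G((u_{i'})^\alpha)$, $i'\le i$. If $i'=i$ the witness is supplied by the block's own linear quotients. If $i'<i$, the linear quotients of $I$ give $k<i$ and a variable $x_m$ with $u_k/\gcd(u_k,u_i)=x_m$ and $x_m\mid u_{i'}/\gcd(u_{i'},u_i)$; in particular $\nu_{x_m}(u_k)=\nu_{x_m}(u_i)+1$, $\nu_{x_j}(u_k)\le\nu_{x_j}(u_i)$ for $j\ne m$, and $\nu_{x_m}(u_{i'})>\nu_{x_m}(u_i)$. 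Since $\sum_t\nu_{x_{mt}}(w')=\nu_{x_m}(u_{i'})>\nu_{x_m}(u_i)=\sum_t\nu_{x_{mt}}(w)$, there is an index $t$ with $\nu_{x_{mt}}(w')>\nu_{x_{mt}}(w)$. I then build a generator $w''$ of the $k$-th block: in the variables $x_{m1},\dots,x_{mk_m}$ multiply the $x_{m\bullet}$-part of $w$ by the single variable $x_{mt}$, and for each $j\ne m$ replace the $x_{j\bullet}$-part of $w$ by any submonomial of it of degree $\nu_{x_j}(u_k)$ (possible since $\nu_{x_j}(u_k)\le\nu_{x_j}(u_i)$). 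By construction $\pi(w'')=u_k$ and the block degrees match, so $w''\in G((u_k)^\alpha)$ lies in a strictly earlier block, $w''/\gcd(w'',w)=x_{mt}$, and $x_{mt}\mid w'/\gcd(w',w)$; this is exactly the required witness.

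For the \emph{if} part, use the section $s\colon S\to S^\alpha$, $s(x_i)=x_{i1}$, and write $u'=s(u)$; then $u'\in G((u)^\alpha)$ for every $u\in G(I)$, and any monomial of $G(I^\alpha)$ involving only the variables $x_{11},\dots,x_{n1}$ is necessarily of the form $u'$ with $u=\pi(w)\in G(I)$. Given a linear-quotients ordering of $I^\alpha$, take the induced ordering on $\{u'_1,\dots,u'_r\}$ and push it down by $\pi$ to an ordering of $G(I)$. For a pair $u_b<u_a$ the corresponding $u'_b<u'_a$ admits a witness $w$ earlier than $u'_a$ with $w/\gcd(w,u'_a)=x_{pq}$ dividing $u'_b/\gcd(u'_b,u'_a)$. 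Because $u'_a,u'_b$ involve only first-index variables, so does $u'_b/\gcd(u'_b,u'_a)$, forcing $q=1$; and since $\nu_{x_{js}}(w)\le\nu_{x_{js}}(u'_a)=0$ for $s\ge 2$, the monomial $w$ also involves only first-index variables, so $w=u'_c$ for some $c$. Applying $\pi$ converts $w/\gcd(w,u'_a)=x_{p1}$ and the divisibility into $u_c/\gcd(u_c,u_a)=x_p$ dividing $u_b/\gcd(u_b,u_a)$, giving linear quotients for $I$.

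The only genuinely delicate points are the two lifting/descent lemmas: in the only-if direction, checking that the explicitly constructed $w''$ is a bona fide element of $G((u_k)^\alpha)$ sitting in a strictly earlier block (where disjointness of blocks and the equality $\nu_{x_m}(u_k)=\nu_{x_m}(u_i)+1$ are essential), and in the if direction, the lemma that the witness $w$ must itself lie in the image of the section so that $\pi$ transports the relation downward. I expect the cross-block construction of $w''$ to be the main obstacle, since it is the one place where the combinatorics of distributing exponents among the split variables $x_{j1},\dots,x_{jk_j}$ actually enters; the remaining steps are routine exponent bookkeeping. An alternative route would run entirely through Theorem \ref{decompos}, lifting a shedding monomial of $I$ to one of $I^\alpha$ and matching $I^u,I_u$ with the corresponding data of $I^\alpha$, but the direct ordering argument above seems more transparent.
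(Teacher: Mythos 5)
Your proof is correct, but it takes a genuinely different route from the paper in the forward direction. The paper proves the ``only if'' part by induction on $|G(I)|$ through the $k$-decomposability characterization (Theorem \ref{decompos}): it picks a shedding monomial $u$, splits $I$ into $I^u$ and $I_u$, observes that $(I^u)^\alpha$ and $(I_u)^\alpha$ give the corresponding splitting of $I^\alpha$, concatenates the two orderings, and settles the cross-term case via the shedding property --- exactly the ``alternative route'' you mention and set aside. Your argument instead orders $G(I^\alpha)$ by blocks $G((u_i)^\alpha)$ following a linear-quotients ordering of $I$, handles the within-block case by noting that $(u_i)^\alpha=P_1^{\nu_{x_1}(u_i)}\cdots P_n^{\nu_{x_n}(u_i)}$ is polymatroidal (via Theorem \ref{poly}) and hence has linear quotients, and resolves the cross-block case by an explicit witness $w''$; the key exponent count $\sum_t\nu_{x_{mt}}(w')>\sum_t\nu_{x_{mt}}(w)$ and the identity $\nu_{x_m}(u_k)=\nu_{x_m}(u_i)+1$ do make $w''$ a legitimate element of $G((u_k)^\alpha)$ with $w''/\gcd(w'',w)=x_{mt}$, so this checks out. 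What your version buys is independence from the linear-quotients/$k$-decomposability equivalence and a completely explicit ordering of $G(I^\alpha)$; what the paper's version buys is brevity, at the cost of invoking Theorem \ref{decompos}. For the ``if'' part the two arguments are close in spirit, but yours is tighter: by restricting the given ordering to the image of the section $x_i\mapsto x_{i1}$ you get for free that the witness $w$ lies in that image (from $\nu_{x_{js}}(w)\le\nu_{x_{js}}(u'_a)=0$ for $s\ge 2$), whereas the paper pushes the entire ordering down through $\pi$, deletes repeats, and then has to argue that the witness descends to a variable.
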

\begin{proof}
``Only if part'': We use induction on the
number of minimal generators of $I$. Since $I$ has linear
quotients, so there exists a shedding monomial
$u=x^{a_1}_{i_1}\ldots x^{a_t}_{i_m}$ for $I$ such that $I^u$ and $I_u$ have linear quotients, by Theorem \ref{decompos}. Set $J=I^\alpha$ and
\begin{center}
$J^0=(M\in G(J)|[u,\pi(M)]\neq 1)$ and $J_0=(M\in G(J)|[u,\pi(M)]=1).$
\end{center}
Note that $J^0=(I^u)^\alpha$ and $J_0=(I_u)^\alpha$. Let $J^0$ and $J_0$ are, respectively, minimally generated by
$M_1,\ldots,M_r$ and $M_{r+1},\ldots,M_s$ and moreover, they have linear quotients with respect the given orderings. We want to show that $J$ has linear quotients with respect to
$M_1,\ldots,M_s$.

Let $M_p$ and $M_q$ are in $G(J)$ with $p<q$. Set $N_i:=\pi(M_i)$. By the induction
hypothesis, $J^0$ and $J_0$ have linear quotients. Thus it suffices to consider that $p\leq r<q$. It is clear
that $N_p\in G(I^u)$ and $N_q\in G(I_u)$. Therefore there is $t\leq
p$ such that $N_t/\gcd(N_t,N_q)=x_{i_l}$ for some $1\leq l\leq m$
and $x_{i_l}$ divides $N_p/\gcd(N_p,N_q)$. It concludes that $M_p/\gcd(M_p,M_q)$ is divided by $x_{i_lh}$ for some $1\leq h\leq k_{i_l}$. To complete the assertion, one can choose a monomial $M_{t'}\in\pi^{-1}(N_t)$ with the property $M_{t'}/\gcd(M_{t'},M_q)=x_{i_lh}$. Clearly, $t'\leq p$.

``If part'': Let $I^\alpha$ has linear quotients with respect to the
ordering $M_1,\ldots,M_s$. Set
$N_l:=\pi(M_l)$. Suppose that we obtain the ordering
$N_{i_1},\ldots,N_{i_r}$, with disjoint monomials, after removing
any repeated monomial beginning on the left-hand of the ordering $N_1,\ldots,N_s$. We want to show that $I$ has linear
quotients with respect to $N_{i_1},\ldots,N_{i_r}$.


Consider two monomials $N_{i_p}$ and $N_{i_q}$ with $p<q$. Let
$N_{i_p}=\pi(M_{p'})$ and
$N_{i_q}=\pi(M_{q'})$. Clearly, $p'<q'$. Therefore there exist $k'<q'$ and a
variable $x_{lm}$ such that $M_{k'}/\gcd(M_{k'},M_{q'})=x_{lm}$ and, moreover, $x_{lm}$ divides $M_{p'}/\gcd(M_{p'},M_{q'})$. Let $N_{i_k}=\pi(M_{k'})$. Since that $N_{i_k}\nmid N_{i_q}$, we have $\gcd(N_{i_k},N_{i_q})\neq N_{i_k}$. Therefore $N_{i_k}/\gcd(N_{i_k},N_{i_q})\neq 1$. Now let $x^a_t$ divide $N_{i_k}/\gcd(N_{i_k},N_{i_q})$. Then for some $s$, $x_{ts}$ divides $M_{k'}/\gcd(M_{k'},M_{q'})$. This implies that $a=1$, $t=l$ and $s=m$. Therefore $N_{i_k}/\gcd(N_{i_k},N_{i_q})=x_l$. Similarly, we show that $x_l$ divides $N_{i_p}/\gcd(N_{i_p},N_{i_q})$, as desired.
\end{proof}

\begin{rem}
The only if part of Theorem \ref{linear qu} was proved, in a different argument, in Proposition 1.6 of \cite{BaHe}.
\end{rem}

\begin{rem}
For a given $\alpha\in\NN^n$ and a monomial ideal $I\subset S$ generated in one degree, we can abbreviate Theorems \ref{poly}, \ref{expan w.p.}, \ref{linear qu}
and also Theorem 4.2 of \cite{BaHe} in the following implications:
$$\begin{array}{ccc}
    I\ \mbox{is polymatroidal} & \Leftrightarrow & I^\alpha\ \mbox{is polymatroidal} \\
    \Downarrow &  & \Downarrow \\
    I\ \mbox{is weakly polymatroidal} & \Leftrightarrow & I^\alpha\ \mbox{is weakly polymatroidal} \\
    \Downarrow &  & \Downarrow \\
    I\ \mbox{has linear quotients} & \Leftrightarrow & I^\alpha\ \mbox{has linear quotients} \\
    \Downarrow &  & \Downarrow \\
    I\ \mbox{has linear resolution} & \Leftrightarrow & I^\alpha\ \mbox{has linear resolution}
  \end{array}
$$
\end{rem}

\section{The expansion functor and toric algebra}

\subsection{White's conjecture}

White \cite{Wh} conjectured that for a matroid $\M$, the toric ideal
associated to $\M$, $I_\M$, is generated by quadrics corresponding to
double swaps. In the previous section it was shown that a monomial
ideal is polymatroidal if and only if its expansion is
polymatroidal. Since that a polymatroidal ideal is generated by
monomials corresponding to the base of a discrete polymatroid, and
also since matroids are a special subclass of discrete polymatroids,
it is then natural to ask about holding white's conjecture for
expansion of a discrete polymatroid. We first bring some notations and definitions.

\begin{defn}
(\cite{HH}) A \emph{discrete polymatroid} on the ground set $[n]$ is a nonempty finite set $P\subset\ZZ^n_+$ satisfying

(D1) if $\v\in\ZZ^n_+$ with $\v\preceq\u$ for some $\u\in P$, then $\v\in P$;

(D2) if $\u,\v\in P$ with $|\u|<|\v|$, then there is $i\in [n]$ with $\u(i)<\v(i)$ such that $\u+\epsilon_i\in P$. Here $\epsilon_i$ denotes the $i$th canonical basis vector in $\RR^n$.
\end{defn}

A \emph{base} of $P$ is a vector $\u\in P$ such that $\u\prec\v$ for no $\v\in P$. It follows from (D1) and (D2) that a nonempty finite set $B\subset\ZZ^n_+$ is the set of bases of a discrete polymatroid on $[n]$ if and only if $B$ satisfies the following conditions:

\begin{enumerate}[\upshape (i)]
  \item all elements of $B$ have the same modulus;
  \item if $\u,\v\in P$ belong to $B$ with $\u(i)>\v(i)$, then there is $j\in [n]$ with $\u_j<\v_j$ such that $\u-\epsilon_i+\epsilon_j\in B$.
\end{enumerate}
We will denote by $\B_P$ the set of bases of $P$ on $[n]$.

Let $P\subset \ZZ^n_+$ be a discrete polymatroid and $\B_P$ its set
of bases. Define $S_P=K[y_\u:\u\in\B_P]$ a polynomial ring over $K$
and write $I_P\subset S_P$ for the toric ideal of the \emph{base
ring} $K[P]:=K[\x^\u:\u\in \B_P]$ where $\x^\u=x^{\u(1)}_1\ldots
x^{\u(n)}_n$ for $\u=(\u(1),\ldots,\u(n))\in\ZZ^n_+$. In other
words, $I_P$ is the kernel of the surjective $K$-algebra
homomorphism $\varphi_P:S_P\rightarrow K[P]$ defined by
$\varphi_P(y_\u)=\x^\u$.

We say that a pair of bases $(\v_1,\v_2)$ is obtained from a pair of
bases $(\u_1,\u_2)$ by a \emph{double swap} if
$\v_1=\u_1+\epsilon_j-\epsilon_i$ and
$\v_2=\u_2+\epsilon_i-\epsilon_j$ for some $i,j$ with $\u_1(i)>\u_2(i)$ and
$\u_2(j)>\u_1(j)$. In this case we write $(\v_1,\v_2)\sim_P (\u_1,\u_2)$. 

Recall that for the canonical basis vector $\epsilon_i\in\RR^n$,
$$\epsilon^\alpha_i=\{\epsilon_{i1},\ldots,\epsilon_{ik_i}\}$$ which
$\epsilon_{ij}$ is a canonical basis vector of $\RR^{|\alpha|}$ with
the $(k_1+\ldots+k_{i-1}+j)$-th component equal to 1 and zero for
other components.

For a discrete polymatroid $P\subset\ZZ^n_+$, $P^\alpha\subset\ZZ^{|\alpha|}$ is defined a discrete
polymatroid which its set of bases is
$\B_{P^\alpha}=\underset{\u\in\B_P}{\bigcup}\u^\alpha$.

\begin{exam}
Consider the discrete polymatroid $P$ with the singleton base set
$\B_P=\{\u:=(1,1)\}$ and let $\alpha=(2,2)$. Then
$$\B_{P^\alpha}=\{\u_1:=(1,0,1,0),\u_2:=(1,0,0,1),\u_3:=(0,1,1,0),\u_4:=(0,1,0,1)\}.$$
Moreover,
\begin{center}
$I_P=0$ and
$I_{P^\alpha}=(y_{\u_1}y_{\u_4}-y_{\u_2}y_{\u_3})$.
\end{center}
\end{exam}

Define the surjective map $$\pi_0:\ZZ^{|\alpha|}\rightarrow\ZZ^n$$ by
$\pi_0(\u)=(a_1,\ldots,a_n)$ for
$\u=(a_{11},\ldots,a_{1k_1},\ldots,a_{n1},\ldots,a_{nk_n})\in\ZZ^{|\alpha|}$
where $a_j=\sum^{k_j}_{l=1}a_{jl}$ for all $j$. We also define the
$K$-algebra epimorphism
$$\begin{array}{rcl}
    \tau:K[P^\alpha]&\rightarrow & K[P] \\
    \x^\u & \mapsto & \x^{\pi_0(\u)}.
  \end{array}
$$

Also, the $K$-algebra homomorphism $\gamma:S_{P^\alpha}\rightarrow S_P$ is
defined by $\gamma(y_{\u})=y_{\pi_0(\u)}$. Therefore we have the
following commutative diagram from surjective maps:
$$\begin{array}{ccc}
  S_{P^\alpha} & \overset{\varphi_{P^\alpha}}{\longrightarrow} & K[P^\alpha] \\
  \gamma \downarrow &  & \downarrow\tau \\
  S_P & \overset{\varphi_P}{\longrightarrow} & K[P].
\end{array}$$

Before proving the main theorem of this subsection we require the following
lemmas.

\begin{lem}\label{pi0}
For $\alpha\in\NN^n$, $\gamma(I_{P^\alpha})=I_P$.
\end{lem}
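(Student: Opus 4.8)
The plan is to establish the two inclusions $\gamma(I_{P^\alpha})\subseteq I_P$ and $I_P\subseteq\gamma(I_{P^\alpha})$ separately, using the commutative diagram for the easy one and a lifting argument for the hard one. A preliminary observation I would record first is that $\pi_0$ maps $\B_{P^\alpha}$ onto $\B_P$ in a canonical way: for every $\u\in\B_P$ the monomial $\prod_{j=1}^n x_{j1}^{\u(j)}$ lies in $G((\x^\u)^\alpha)$, since $x_{j1}^{\u(j)}\in G(P_j^{\u(j)})$. Its exponent vector, which I will call $\hat{\u}$, therefore belongs to $\u^\alpha\subseteq\B_{P^\alpha}$ and satisfies $\pi_0(\hat{\u})=\u$; note that $\hat{\u}$ concentrates all the mass of block $j$ in the single coordinate $(j,1)$. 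In particular $\gamma(y_{\hat{\u}})=y_{\u}$, so $\gamma$ is surjective and $\gamma(I_{P^\alpha})$ is genuinely an ideal of $S_P$.

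For the inclusion $\gamma(I_{P^\alpha})\subseteq I_P$ I would simply chase the commutative diagram. If $f\in I_{P^\alpha}=\ker\varphi_{P^\alpha}$, then $\varphi_{P^\alpha}(f)=0$, hence $\tau(\varphi_{P^\alpha}(f))=0$, and by commutativity $\varphi_P(\gamma(f))=\tau(\varphi_{P^\alpha}(f))=0$, so $\gamma(f)\in\ker\varphi_P=I_P$. This direction is immediate and uses nothing beyond the diagram already displayed.

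The substance lies in the reverse inclusion, and this is the step I expect to be the main obstacle. Since $I_P$ is a toric ideal it is generated by binomials $y_{\u_1}\cdots y_{\u_k}-y_{\v_1}\cdots y_{\v_k}$ with $\u_1+\cdots+\u_k=\v_1+\cdots+\v_k$, where the equality of the number of factors on the two sides is forced by the common modulus of the bases. Because $\gamma(I_{P^\alpha})$ is an ideal, it suffices to lift each such generator into $I_{P^\alpha}$. The naive lift, choosing arbitrary preimages in $\B_{P^\alpha}$ of each $\u_i$ and $\v_i$, fails: $\pi_0$ is not injective, so the two lifted exponent sums need not agree in $\ZZ^{|\alpha|}$. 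The remedy is to lift uniformly via the canonical sections $\hat{\u}_i$ and $\hat{\v}_i$ above. Then $\sum_i\hat{\u}_i$ and $\sum_i\hat{\v}_i$ both equal the vector carrying $\sum_i\u_i(j)=\sum_i\v_i(j)$ in each slot $(j,1)$ and zeros elsewhere, so they coincide. Consequently $\varphi_{P^\alpha}\bigl(y_{\hat{\u}_1}\cdots y_{\hat{\u}_k}-y_{\hat{\v}_1}\cdots y_{\hat{\v}_k}\bigr)=0$, placing this binomial in $I_{P^\alpha}$, and applying $\gamma$ returns the original generator. This proves $I_P\subseteq\gamma(I_{P^\alpha})$ and completes the argument.
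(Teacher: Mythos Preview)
Your proof is correct and follows essentially the same approach as the paper: the inclusion $\gamma(I_{P^\alpha})\subseteq I_P$ is obtained from the commutative diagram, and for the reverse inclusion the paper lifts a binomial generator $y_{\u_1}\cdots y_{\u_m}-y_{\v_1}\cdots y_{\v_m}\in I_P$ via the same canonical section you call $\hat{\u}$, written there as $\u'_i=\sum_j \u_i(j)\epsilon_{j1}$. Your additional remarks (surjectivity of $\gamma$, why an arbitrary lift fails) are not in the paper but are helpful clarifications rather than a different method.
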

\begin{proof}
Let $f\in I_{P^\alpha}$. Then
$\varphi_P(\gamma(f))=\tau(\varphi_{P^\alpha}(f))=0$ and so
$\gamma(f)\in I_P$. For the converse inclusion, let
$g=y_{\u_1}\ldots y_{\u_m}-y_{\v_1}\ldots y_{\v_m}\in I_P$. Then for all $i$, set $\u'_i:=\sum^m_{j}\u_i(j)\epsilon_{j1}$,
$\v'_i:=\sum^m_{j}\v_i(j)\epsilon_{j1}$ and $h:=y_{\u'_1}\ldots
y_{\u'_m}-y_{\v'_1}\ldots y_{\v'_m}$. Hence $\gamma(h)=g$. By the
fact that a binomial $y_{\u_1}\ldots y_{\u_m}-y_{\v_1}\ldots
y_{\v_m}\in I_P$ if and only if $\sum^m \u_i=\sum^m \v_i$, we
conclude that $\sum^m \u'_i=\sum^m \v'_i$. Therefore $h\in
I_{P^\alpha}$ and so $g\in\gamma(I_{P^\alpha})$.
\end{proof}



\begin{thm}\label{main}
Let $\alpha\in\NN^n$ and $P$ be a discrete polymatroid. If $I_P$ is
generated by quadratic binomials corresponding to double swaps then $I_{P^\alpha}$ is, too.
\end{thm}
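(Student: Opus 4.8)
The plan is to let $J\subseteq S_{P^\alpha}$ be the ideal generated by the quadratic double-swap binomials of $P^\alpha$, so that $J\subseteq I_{P^\alpha}$ trivially, and to prove the reverse inclusion. Since a toric ideal is spanned by binomials, it suffices to show that every binomial $f=y_{\w_1}\cdots y_{\w_m}-y_{\w_1'}\cdots y_{\w_m'}\in I_{P^\alpha}$ (equivalently $\sum_i\w_i=\sum_i\w_i'$ in $\ZZ^{|\alpha|}$) lies in $J$.

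The first step I would establish is a \emph{lifting} of double swaps. Suppose $(\v_1,\v_2)\sim_P(\u_1,\u_2)$ downstairs, say $\v_1=\u_1+\epsilon_j-\epsilon_i$ and $\v_2=\u_2+\epsilon_i-\epsilon_j$ with $\u_1(i)>\u_2(i)$ and $\u_2(j)>\u_1(j)$, and take any lifts $\w_1\in\u_1^\alpha$, $\w_2\in\u_2^\alpha$. The crucial observation is that $\sum_l\w_1(il)=\u_1(i)>\u_2(i)=\sum_l\w_2(il)$ forces a fiber index $l$ with $\w_1(il)>\w_2(il)$, and likewise an index $m$ with $\w_2(jm)>\w_1(jm)$; then $\z_1=\w_1+\epsilon_{jm}-\epsilon_{il}$ and $\z_2=\w_2+\epsilon_{il}-\epsilon_{jm}$ satisfy $(\z_1,\z_2)\sim_{P^\alpha}(\w_1,\w_2)$, with $\pi_0(\z_1)=\v_1$ and $\pi_0(\z_2)=\v_2$. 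Since $\gamma$ carries this upstairs double swap back onto the given downstairs one, we get $I_P\subseteq\gamma(J)$, and as $\gamma(I_{P^\alpha})=I_P$ by Lemma \ref{pi0}, it follows that $\gamma(J)=I_P=\gamma(I_{P^\alpha})$.

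Next I would split $f$ through the commutative diagram. Writing $\u_i=\pi_0(\w_i)$ and $\u_i'=\pi_0(\w_i')$, the image $\gamma(f)=\prod_i y_{\u_i}-\prod_i y_{\u_i'}$ lies in $I_P$, which by hypothesis is generated by double swaps; hence the multiset $(\u_i)$ can be carried to $(\u_i')$ by a finite sequence of downstairs double swaps. Applying the lifting step to this sequence one factor-pair at a time and telescoping produces a family $(\tilde\w_i)$ with $\pi_0(\tilde\w_i)=\u_i'$, $\sum_i\tilde\w_i=\sum_i\w_i$, and $\prod_i y_{\w_i}-\prod_i y_{\tilde\w_i}\in J$. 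Subtracting, the task reduces to showing that the leftover binomial $\prod_i y_{\tilde\w_i}-\prod_i y_{\w_i'}$, which now lies in $\ker\gamma\cap I_{P^\alpha}$ (its two monomials share the same downstairs multiset and the same total degree vector), belongs to $J$.

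This last reduction is the main obstacle: it asks us to connect any two liftings of a common downstairs multiset, with equal total degree, through double swaps of $P^\alpha$. One is tempted to use only the projection-preserving (``horizontal'') swaps that move a unit between two fiber slots $il,im$ of a single block, but these do not suffice on their own, because the strict inequalities required in a double swap can leave a within-block configuration with \emph{no} admissible horizontal move; one is therefore forced to use cross-block swaps that temporarily alter the projection and later restore it. The plan is to argue that the fiber over a fixed downstairs multiset carries the structure of a Segre product of Veronese configurations, whose toric relations are generated by the quadratic exchange moves, and to realize each such move as a double swap of $P^\alpha$. I expect the delicate part of the argument to be the bookkeeping that keeps every intermediate exchange strict while steering the chain back onto the correct fiber.
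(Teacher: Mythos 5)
Your overall architecture is sound and, in its first two steps, coincides with the paper's: the lifting of a downstairs double swap to an upstairs one (using $\sum_l\w_1(il)=\u_1(i)>\u_2(i)=\sum_l\w_2(il)$ to locate admissible fiber indices) is correct, and so is the reduction, via Lemma \ref{pi0} and the hypothesis on $I_P$, to binomials $\prod_a y_{\tilde\w_a}-\prod_a y_{\w'_a}$ whose two monomials have the same downstairs multiset and the same exponent sum. The problem is that the remaining step is exactly where the real content of the theorem lies, and the repair you sketch for it does not work. The fiber over a fixed downstairs multiset is indeed a Segre--Veronese type configuration whose toric relations are generated by the quadratic exchange moves $(\w_a,\w_b)\mapsto(\w_a-\epsilon_{il}+\epsilon_{im},\,\w_b+\epsilon_{il}-\epsilon_{im})$, but such a move is a double swap of $P^\alpha$ only if the strict inequalities $\w_a(il)>\w_b(il)$ and $\w_b(im)>\w_a(im)$ hold when the move is read from one of its two sides, and they can fail from both. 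Concretely, take $n=2$, $\B_P=\{(c,10-c):0\leq c\leq 10\}$, $\alpha=(2,1)$, and (writing coordinates in the order $x_{11},x_{12},x_{21}$) the binomial $y_{(0,1;9)}y_{(5,5;0)}-y_{(1,0;9)}y_{(4,6;0)}\in I_{P^\alpha}$: both monomials project to the multiset $\{(1,9),(10,0)\}$ and have sum $(5,6;9)$, the only exchange move joining them inside that fiber is the horizontal one at $(\epsilon_{11},\epsilon_{12})$, and since $(0,1;9)$ is dominated by $(5,5;0)$ in both coordinates of block $1$ (and likewise $(1,0;9)$ by $(4,6;0)$), that move is not a double swap in either direction.

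The binomial nevertheless lies in the double-swap ideal, but only via a detour that leaves the fiber over the downstairs multiset: it equals $\left(y_{(0,1;9)}y_{(5,5;0)}-y_{(1,1;8)}y_{(4,5;1)}\right)-\left(y_{(1,0;9)}y_{(4,6;0)}-y_{(1,1;8)}y_{(4,5;1)}\right)$, and each parenthesis is a genuine double swap exchanging a unit between block $1$ and block $2$. So the ``delicate bookkeeping'' you defer --- producing such cross-block detours for arbitrary degree and arbitrary configurations, while keeping every intermediate pair of bases and every intermediate inequality strict --- is not a routine verification but the actual missing proof; as written, your argument stops precisely at that point. (For what it is worth, the paper's own treatment of this step telescopes within a single block and tacitly assumes each intermediate exchange satisfies the double-swap inequalities, so it is vulnerable to the same example; a complete proof has to confront the detour phenomenon explicitly.)
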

\begin{proof}
Suppose $I_P$ is
generated by quadratic binomials corresponding to double swaps. We consider a binomial $f\in I_{P^\alpha}$ and show that we can write $f$ as sum of quadratic binomials corresponding to double swaps.

Suppose $0\neq f\in
I_{P^\alpha}$ is of degree 2. Let $f=y_{\u'_1}y_{\u'_2}-y_{\v'_1}y_{\v'_2}$. Let $\pi_0(\u'_i)=\u_i$ and $\pi_0(\v'_i)=\v_i$. By
Lemma \ref{pi0},
$\gamma(y_{\u'_1}y_{\u'_2}-y_{\v'_1}y_{\v'_2})=y_{\u_1}y_{\u_2}-y_{\v_1}y_{\v_2}\in
I_P$. If $y_{\u_1}y_{\u_2}-y_{\v_1}y_{\v_2}=0$, then one can assume that $\u_1=\v_1$ and $\u_2=\v_2$. It follows from $\u'_1+\u'_2=\v'_1+\v'_2$ that
$$\v'_1=\u'_1+\w'-\z',\ \v'_2=\u'_2+\z'-\w'$$
where $\w'=\sum^{f_r}_{i=f_1}\sum^{g_s}_{j=g_1} t_{ij}\epsilon_{ij}$, $\z'=\sum^{f_r}_{i=f_1}\sum^{h_t}_{j=h_1}s_{ij}\epsilon_{ij}$ and, moreover, $\sum_j t_{ij}=\sum_j s_{ij}$. Therefore we can write $y_{\u'_1}y_{\u'_2}-y_{\v'_1}y_{\v'_2}$ as a sum of quadratic binomials corresponding to double swaps:
$$\begin{array}{rl}
  y_{\u'_1}y_{\u'_2}-y_{\v'_1}y_{\v'_2}= & (y_{\u'_1}y_{\u'_2}-y_{\u'_1+\epsilon_{f_1g_1}-\epsilon_{f_1h_1}}y_{\u'_2+\epsilon_{f_1h_1}-\epsilon_{f_1g_1}})+\ldots+ \\
   & (y_{\u'_1+\sum^{f_r}_{i=f_1}\sum^{g_{s-1}}_{j=g_1} t_{ij}\epsilon_{ij}-\sum^{f_r}_{i=f_1}\sum^{h_{t-1}}_{j=h_1}s_{ij}\epsilon_{ij}}y_{\u'_2+\sum^{f_r}_{i=f_1}\sum^{h_{t-1}}_{j=h_1}s_{ij}\epsilon_{ij}-\sum^{f_r}_{i=f_1}\sum^{g_{s-1}}_{j=g_1} t_{ij}\epsilon_{ij}}-
y_{\v'_1}y_{\v'_2}).
\end{array}$$

Thus suppose that $y_{\u_1}y_{\u_2}-y_{\v_1}y_{\v_2}\neq 0$.
By the assumption, we have
$$y_{\u_1}y_{\u_2}-y_{\v_1}y_{\v_2}=\sum^t_{i=1}(y_{\u_{i1}}y_{\u_{i2}}-y_{\v_{i1}}y_{\v_{i2}})$$
where $(\v_{i1},\v_{i2})\sim_P (\u_{i1},\u_{i2})$ for all $i$.

\textbf{Case 1:} If $t=1$, then $(\v_1,\v_2)\sim_P (\u_1,\u_2)$. Let
$\v_1=\u_1+\epsilon_{i_q}-\epsilon_{i_p}$ and
$\v_2=\u_2+\epsilon_{i_p}-\epsilon_{i_q}$.

Then we will have
$\v'_1=\u'_1+\epsilon_{i_qj_{q'}}-\epsilon_{i_pj_{p'}}$ and
$\v'_2=\u'_2+\epsilon_{i_pj_{p''}}-\epsilon_{i_qj_{q''}}$. Now $\u'_1+\u'_2=\v'_1+\v'_2$ implies that $j_{p'}=j_{p''}$ and
$j_{q'}=j_{q''}$. Thus $(\v'_1,\v'_2)\sim_{P^\alpha} (\u'_1,\u'_2)$.

\textbf{Case 2:} If $t>1$, then by choosing suitable $u'_{ij}$'s from $\B_{P^\alpha}$ one may consider
$$y_{\u'_1}y_{\u'_2}-y_{\v'_1}y_{\v'_2}=\sum^t_{i=1}(y_{\u'_{i1}}y_{\u'_{i2}}-y_{\v'_{i1}}y_{\v'_{i2}})$$
where $\pi_0(\u'_{ij})=\u_{ij}$ and $\pi_0(\v'_{ij})=\v_{ij}$ for
all $i$ and $j$ and, moreover, $y_{\u'_{i1}}y_{\u'_{i2}}-y_{\v'_{i1}}y_{\v'_{i2}}\in I_{P^\alpha}$.
It follows from case 1 that $y_{\u'_1}y_{\u'_2}-y_{\v'_1}y_{\v'_2}$ is generated by quadratic
binomials corresponding to double swaps.

Now suppose that $f=y_{\u'_1}\ldots y_{\u'_m}-y_{\v'_1}\ldots
y_{\v'_m}\in I_{P^\alpha}$ where $m>2$ and every binomial
$y_{\u'_1}\ldots y_{\u'_{m'}}-y_{\v'_1}\ldots y_{\v'_{m'}}\in
I_{P^\alpha}$ with $m'<m$ is generated by quadratic binomials
corresponding to double swaps. If $\u'_i=\v'_j$ for some $1\leq
i,j\leq m$ then
$$y_{\u'_1}\ldots y_{\u'_m}-y_{\v'_1}\ldots y_{\v'_m}=y_{\u'_i}(y_{\u'_1}\ldots \hat{y}_{\u'_i}\ldots y_{\u'_m}-y_{\v'_1}\ldots \hat{y}_{\v'_j}\ldots y_{\v'_m})$$
which is generated by quadratic binomials corresponding to double
swaps, by induction hypothesis. So assume that $\u'_i\neq \v'_j$ for
all $1\leq i,j\leq m$. Let $\u_i=\pi_0(\u'_i)$ and $\v_i=\pi_0(\v'_i)$, for all $i$. It
follows from $\sum^m \u_i=\sum^m \v_i$ that $y_{\u_1}\ldots
y_{\u_m}-y_{\v_1}\ldots y_{\v_m}\in I_P$. First, let $y_{\u_1}\ldots
y_{\u_m}-y_{\v_1}\ldots y_{\v_m}=0$. Then we can suppose that $\u_1=\v_1$. Let $\u'_1(ij)>\v'_1(ij)$ and $\u'_1(ik)<\v'_1(ik)$. This implies that there is $\u'_l$ with $\u'_l(ik)>0$. For convenience, let us set $l=2$. Hence we can write
$$\begin{array}{rl}
    y_{\u'_1}\ldots y_{\u'_m}-y_{\v'_1}\ldots y_{\v'_m}= & (y_{\u'_1}y_{\u'_2}\ldots y_{\u'_m}-y_{\u'_1-\epsilon_{ij}+\epsilon_{ik}}y_{\u'_2-\epsilon_{ik}+\epsilon_{ij}}y_{\u'_3}\ldots y_{\u'_m})+ \\
     & (y_{\u'_1-\epsilon_{ij}+\epsilon_{ik}}y_{\u'_2-\epsilon_{ik}+\epsilon_{ij}}y_{\u'_3}\ldots y_{\u'_m}
-y_{\v'_1}\ldots y_{\v'_m})\\
     =& (y_{\u'_1}y_{\u'_2}-y_{\u'_1-\epsilon_{ij}+\epsilon_{ik}}y_{\u'_2-\epsilon_{rs}+\epsilon_{ij}})y_{\u'_3}\ldots y_{\u'_m}+ \\
     & (y_{\u'_1-\epsilon_{ij}+\epsilon_{ik}}y_{\u'_2-\epsilon_{ik}+\epsilon_{ij}}y_{\u'_3}\ldots y_{\u'_m}
-y_{\v'_1}\ldots y_{\v'_m}).
  \end{array}
$$
Set $\u''_1=\u'_1-\epsilon_{ij}+\epsilon_{ik}$ and $\u''_2=\u'_2-\epsilon_{ik}+\epsilon_{ij}$. Clearly, $y_{\u''_1} y_{\u''_2}y_{\u'_3}\ldots y_{\u'_m}-y_{\v'_1}\ldots
y_{\v'_m}\in I_{P^\alpha}$ and $\pi_0(\u''_1)=\pi_0(\v'_1)$. Now we repeat the above procedure for $y_{\u''_1} y_{\u''_2}y_{\u'_3}\ldots y_{\u'_m}-y_{\v'_1}\ldots
y_{\v'_m}$ and after a finite number of steps, we obtain
$$y_{\u'_1}\ldots y_{\u'_m}-y_{\v'_1}\ldots y_{\v'_m}=\sum f_ig_i+y_{\v'_1}(y_{\u'''_2}y_{\u'''_3}\ldots y_{\u'''_m}
-y_{\v'_2}\ldots y_{\v'_m})$$
where $f_i$'s are monomials in $S_{P^\alpha}$ and $g_i$'s are quadratic binomials corresponding to double
swaps. By induction hypothesis, $y_{\u'''_2}y_{\u'''_3}\ldots y_{\u'''_m}-y_{\v'_2}\ldots y_{\v'_m}\in I_{P^\alpha}$ is generated by quadratic binomials corresponding to double swaps. Therefore the assertion holds when $y_{\u_1}\ldots
y_{\u_m}-y_{\v_1}\ldots y_{\v_m}=0$. So assume that $y_{\u_1}\ldots
y_{\u_m}-y_{\v_1}\ldots y_{\v_m}\neq 0$. Therefore, by assumption,
$$y_{\u_1}\ldots y_{\u_m}-y_{\v_1}\ldots y_{\v_m}=\sum^t_{i=1}f_i(y_{\u_{i1}}y_{\u_{i2}}-y_{\v_{i1}}y_{\v_{i2}})$$
where $(\v_{i1},\v_{i2})\sim_P (\u_{i1},\u_{i2})$ and $f_i$'s are monomials in $S_P$. Without loss of
generality we may assume that
\begin{equation}\label{2}
y_{\u_1}\ldots y_{\u_m}-y_{\v_1}\ldots y_{\v_m}=\sum^t_{i=1}(y_{\u_{i1}}y_{\u_{i2}}y_{\u_{i3}}\ldots y_{\u_{im}}-y_{\v_{i1}}y_{\v_{i2}}y_{\u_{i3}}\ldots y_{\u_{im}})
\end{equation}
which for every $i$, $(\v_{i1},\v_{i2})\sim_P(\u_{i1},\u_{i2})$. Clearly, for all $i$, $y_{\u_{i1}}y_{\u_{i2}}y_{\u_{i3}}\ldots y_{\u_{im}}-y_{\v_{i1}}y_{\v_{i2}}y_{\u_{i3}}\ldots y_{\u_{im}}\in I_P$.

\textbf{Case $1'$:} If $t=1$, then we may assume that $\u_{1j}=\u_j$ for all $j=1,\ldots,m$, $\v_{1j}=\v_j$ for $j=1,2$ and $\v_{1j}=\u_j$
for all $j=3,\ldots,m$. Let $\v_1=\u_1-\varepsilon_i+\varepsilon_j$ and $\v_2=\u_2-\varepsilon_j+\varepsilon_i$. Since that $\u_1(i)>\u_2(i)$ and $\u_1(j)<\u_2(j)$, it follows that there are $r$ and $s$ such that $\u'_1(ir)>\u'_2(ir)$ and $\u'_1(js)<\u'_2(js)$. We have
$$\begin{array}{rl}
    y_{\u'_1}\ldots y_{\u'_m}-y_{\v'_1}\ldots y_{\v'_m}= & (y_{\u'_1}y_{\u'_2}-y_{\u'_1-\varepsilon_{ir}+\varepsilon_{js}}y_{\u'_2-\varepsilon_{js}+\varepsilon_{ir}})y_{\u'_3}\ldots y_{\u'_m} \\
     &+ y_{\u'_1-\varepsilon_{ir}+\varepsilon_{js}}y_{\u'_2-\varepsilon_{js}+\varepsilon_{ir}}y_{\u'_3}\ldots y_{\u'_m}-y_{\v'_1}\ldots y_{\v'_m}
  \end{array}
$$
Note that $\pi_0(\u'_1-\varepsilon_{ir}+\varepsilon_{js})=\pi_0(\v'_1)$ and $\pi_0(\u'_2-\varepsilon_{js}+\varepsilon_{ir})=\pi_0(\v'_2)$ and $y_{\u'_1-\varepsilon_{ir}+\varepsilon_{js}}y_{\u'_2-\varepsilon_{js}+\varepsilon_{ir}}y_{\u'_3}\ldots y_{\u'_m}-y_{\v'_1}\ldots y_{\v'_m}\in I_{P^\alpha}$. Since $y_{\pi_0(\u'_1-\varepsilon_{ir}+\varepsilon_{js})}y_{\pi_0(\u'_2-\varepsilon_{js}+\varepsilon_{ir})}y_{\pi_0(\u'_3)}\ldots y_{\pi_0(\u'_m)}-y_{\pi_0(\v'_1)}\ldots y_{\pi_0(\v'_m)}=0$, it follows from case 2 that $y_{\u'_1-\varepsilon_{ir}+\varepsilon_{js}}y_{\u'_2-\varepsilon_{js}+\varepsilon_{ir}}y_{\u'_3}\ldots y_{\u'_m}-y_{\v'_1}\ldots y_{\v'_m}$ is generated by quadratic binomials corresponding to double swaps and so $y_{\u'_1}\ldots y_{\u'_m}-y_{\v'_1}\ldots y_{\v'_m}$ is generated by quadratic binomials corresponding to double swaps.


\textbf{Case $2'$:} If $t>1$, then considering the equality (\ref{2}), we can choose some suitable bases $\u'_{ij}$
and $\v'_{ij}$ of $\B_{P^\alpha}$, with $\pi_0(\u'_{ij})=\u_{ij}$
and $\pi_0(\v'_{ij})=\v_{ij}$ such that
$$y_{\u'_1}\ldots y_{\u'_m}-y_{\v'_1}\ldots y_{\v'_m}=\sum^t_{i=1}(y_{\u'_{i1}}y_{\u'_{i2}}y_{\u'_{i3}}\ldots y_{\u'_{im}}-y_{\v'_{i1}}y_{\v'_{i2}}y_{\v'_{i3}}\ldots y_{\v'_{im}})$$
and $y_{\u'_{i1}}y_{\u'_{i2}}y_{\u'_{i3}}\ldots y_{\u'_{im}}-y_{\v'_{i1}}y_{\v'_{i2}}y_{\v'_{i3}}\ldots y_{\v'_{im}}\in I_{P^\alpha}$.
Now by the case $1'$, every binomial $y_{\u'_{i1}}y_{\u'_{i2}}y_{\u'_{i3}}\ldots
y_{\u'_{im}}-y_{\v'_{i1}}y_{\v'_{i2}}y_{\v'_{i3}}\ldots
y_{\v'_{im}}$ is generated by quadratic binomials corresponding to
double swaps and so $y_{\u'_1}\ldots y_{\u'_m}-y_{\v'_1}\ldots
y_{\v'_m}$ has the same property, as desired.

\end{proof}

Since a matroid may be regarded as a discrete polymatroid with
the set of bases consisting of $(0,1)$-vectors, so we can conclude
the following.

\begin{cor}\label{white}
Let $\alpha\in\NN^n$ and $\M$ be a matroid. If $\M$ satisfies the White's
conjecture then $\M^\alpha$ does, too.
\end{cor}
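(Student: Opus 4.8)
The plan is to deduce the corollary directly from Theorem \ref{main}, the only extra work being to verify that $\M^\alpha$ is again a matroid, so that the phrase ``satisfies White's conjecture'' is meaningful for it. First I would regard the matroid $\M$ as a discrete polymatroid $P$ whose set of bases $\B_P=\B_\M$ consists of $(0,1)$-vectors in $\ZZ^n_+$, exactly as described in the sentence preceding the corollary. Under this identification, the statement that $\M$ satisfies White's conjecture is precisely the statement that $I_P=I_\M$ is generated by quadratic binomials corresponding to double swaps, which is the hypothesis of Theorem \ref{main}. Applying that theorem then yields immediately that $I_{P^\alpha}=I_{\M^\alpha}$ is generated by quadratic binomials corresponding to double swaps.

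What remains is to confirm that $\M^\alpha$ is in fact a matroid, i.e. that every base of $\M^\alpha$ is again a $(0,1)$-vector; this is the only step requiring a small computation, and is where I expect whatever difficulty there is to lie. To settle it I would unwind the definition of the expansion on a single base $\u\in\B_\M$. Since $\u$ is a $(0,1)$-vector, each component $\u(i)$ is $0$ or $1$, so in the product $P^{\u(1)}_1\ldots P^{\u(n)}_n$ defining $(\x^\u)^\alpha$ the factor $P^{\u(i)}_i$ equals $P_i=(x_{i1},\ldots,x_{ik_i})$ when $\u(i)=1$ and contributes $1$ when $\u(i)=0$. Hence the minimal generators of $(\x^\u)^\alpha$ are the squarefree monomials $\prod_{i:\u(i)=1}x_{ij_i}$ with $j_i\in\{1,\ldots,k_{i}\}$, and each corresponding exponent vector in $\u^\alpha$ is a $(0,1)$-vector.

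Since $\B_{\M^\alpha}=\bigcup_{\u\in\B_\M}\u^\alpha$ and $P^\alpha$ is a discrete polymatroid, and since a discrete polymatroid all of whose bases are $(0,1)$-vectors is a matroid, it follows that $\M^\alpha$ is a matroid. Putting these facts together, $\M^\alpha$ is a matroid whose toric ideal $I_{\M^\alpha}$ is generated by quadrics corresponding to double swaps, which is precisely the assertion that White's conjecture holds for $\M^\alpha$. The whole argument is thus essentially a specialization of Theorem \ref{main}, combined with the observation that the expansion functor carries squarefree (that is, matroidal) generators to squarefree generators.
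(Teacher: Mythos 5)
Your proof is correct and takes essentially the same route as the paper: the corollary is obtained as an immediate specialization of Theorem \ref{main} by regarding $\M$ as a discrete polymatroid whose bases are $(0,1)$-vectors. Your extra verification that $\M^\alpha$ is again a matroid (because the expansion of a squarefree generator is squarefree) is a detail the paper leaves implicit, and your argument for it is sound.
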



\begin{rem}
We conjecture that the converse of Theorem \ref{main} holds but we could not present any proof. Indeed, this will conclude the converse of Corollary \ref{white}.
\end{rem}

\subsection{Some combinatorial and algebraic properties}

Let $\A=\{u_1,\ldots,u_m\}$ be a set of monomials belonging to
$S=K[x_1,\ldots,x_n]$ and suppose that the affine semigroup ring
$K[\A]=K[u_1,\ldots,u_m]$ is a homogeneous $K$-algebra. Let
$S_\A:=K[y_{u_1},\ldots,y_{u_m}]$ be the polynomial ring in $n$
variables over $K$ with each $\deg(y_{u_i})=1$ and let $I_\A$ denote
the kernel of the surjective homomorphism
$\varphi_{\A}:S_\A\rightarrow K[\A]$ defined by
$\varphi_\A(y_{u_i})=u_i$ for all $1\leq i\leq m$. $I_\A$ and
$K[\A]$ are, respectively, called \emph{toric ideal} and \emph{toric ring} of
$\A$.

Firstly, we recall the concept of \emph{combinatorial pure subring}
of a toric ring, introduced in \cite{OhHeHi}, which we will use in
the rest of the paper. Let $T\subseteq [n]:=\{x_1,\ldots,x_n\}$. If $T$ is
a nonempty subset of $[n]$, then we set $\A_T:=\A\cap K[\{x_i:x_i\in T\}]$. A subring of $K[\A]$ of the
form $K[\A_T]$ with $\emptyset\neq T\subseteq [n]$ is called a
combinatorial pure subring of $K[\A]$. For
$\A_T=\{u_{i_1},\ldots,u_{i_r}\}$, we set
$S_{\A_T}=\{y_{u_{i_1}},\ldots,y_{u_{i_r}}\}$. Therefore
$I_{\A_T}=I_{\A}\cap S_{\A_T}$.

\begin{lem}\label{comb pure sub}
Let $\alpha\succeq\beta$ be two $n$-tuple vectors in $\NN^n$. Then
$K[\A^\beta]$ is a combinatorial pure subring of $K[\A^\alpha]$.
\end{lem}
\begin{proof}
Let $\alpha=(k_1,\ldots,k_n)$ and $\beta=(l_1,\ldots,l_n)$. Set
$T=\{x_{11},\ldots,x_{1l_1},\ldots,x_{n1},\ldots,x_{nl_n}\}$. It is
clear that $(\A^\alpha)_T=\A^\beta$.
\end{proof}

\begin{rem}
In \cite{OhHeHi} the authors showed that if $\A$ is a homogeneous
affine semigroup ring generated by monomials belonging to a
polynomial ring with the toric ideal $I$ which is normal, Golod,
Koszul, strongly Koszul, sequentially Koszul or extendable
sequentially Koszul, then any of its combinatorial pure subrings, as
$B$, inherits each of these properties. Moreover, if $\G$ is any
reduced Gr\"{o}bner basis of $I$ then $\G\cap B$ is the reduced
Gr\"{o}bner basis of $J$, where $J$ is the toric ideal of $B$.

Lemma \ref{comb pure sub} implies that $K[\A]$ is a combinatorial pure subring of $K[\A^\alpha]$ and so if $K[\A^\alpha]$ has one of the above properties, then $K[\A]$ has the same property, too.
\end{rem}

In the following we investigate some algebraic properties for
$K[\A^\alpha]$ when they hold for $K[\A]$.

\subsection*{Gr\"{o}bner basis}

Proposition 1.1 of \cite{OhHeHi} together with Lemma \ref{comb pure
sub} guarantees the following:

\begin{prop}\label{oHH Grob}
Let $\alpha\in\NN^n$ and let $\A=\{u_1,\ldots,u_m\}$ be a set of
monomials belonging to $S=K[x_1,\ldots,x_n]$. If $\G$ is the reduced
Gr\"{o}bner basis of $I_{\A^\alpha}$ with respect to a term order
$<$ on $S_{\A^\alpha}$, then $\G\cap K[\A]$ is the reduced
Gr\"{o}bner basis of $I_\A$ with respect to a term order induced by
$<$ on $S_\A$.
\end{prop}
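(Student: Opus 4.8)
The plan is to deduce the statement directly from the combinatorial pure subring machinery already in place, since it is essentially a specialization of the general fact recalled in the remark following Lemma \ref{comb pure sub}. The key observation is that $\A$ itself is an expansion: taking $\1=(1,\ldots,1)\in\NN^n$, the expansion $\A^{\1}$ merely renames each variable $x_i$ as $x_{i1}$, so $K[\A^{\1}]$ is canonically identified with $K[\A]$ and $I_{\A^{\1}}$ with $I_\A$.

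First I would record that $\alpha\succeq\1$: since $\alpha=(k_1,\ldots,k_n)\in\NN^n$ forces each $k_i\geq 1$ (otherwise the block $x_{i1},\ldots,x_{ik_i}$ of variables of $S^\alpha$ would be empty), we have $\alpha\succeq\1$. Then Lemma \ref{comb pure sub}, applied with $\beta=\1$, realizes $K[\A^{\1}]=K[\A]$ as a combinatorial pure subring of $K[\A^\alpha]$; concretely it is the subring $K[(\A^\alpha)_T]$ attached to the variable set $T=\{x_{11},x_{21},\ldots,x_{n1}\}$ consisting of the first copy of each original variable, and one checks $(\A^\alpha)_T=\A^{\1}=\A$.

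With $K[\A]$ so realized, I would invoke Proposition 1.1 of \cite{OhHeHi}: for a combinatorial pure subring $B=K[\A_T]$ and any reduced Gr\"{o}bner basis $\G$ of the ambient toric ideal, the intersection $\G\cap S_{\A_T}$ is the reduced Gr\"{o}bner basis of the toric ideal of $B$ with respect to the term order obtained by restricting $<$ to the variables $\{y_u:u\in\A_T\}$. Applying this with $B=K[\A]$, ambient ideal $I_{\A^\alpha}$ and $J=I_\A$ yields exactly the claim, the induced term order on $S_\A$ being the restriction of $<$.

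There is essentially no obstacle beyond bookkeeping. The only point requiring care is the identification $(\A^\alpha)_T=\A^{\1}=\A$ together with the matching $I_{\A_T}=I_{\A^\alpha}\cap S_{\A_T}=I_\A$, after which the cited statement applies verbatim; everything else is a direct citation of \cite{OhHeHi} once the combinatorial pure subring structure supplied by Lemma \ref{comb pure sub} is available.
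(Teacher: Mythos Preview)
Your proposal is correct and matches the paper's own argument exactly: the paper simply states that the proposition is guaranteed by Proposition~1.1 of \cite{OhHeHi} together with Lemma~\ref{comb pure sub}, and your write-up just makes explicit the identification $\A=\A^{\1}$ and the choice $\beta=\1$ needed to apply that lemma. There is nothing to add.
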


Let $\A=\{u_1,\ldots,u_m\}$ be a set of monomials belonging to
$S=K[x_1,\ldots,x_n]$. Define the term order ``$<^\sharp_{\lex}$''
on the variables of $\{y_{u_1},\ldots,y_{u_m}\}$ in the following
form:
\begin{center}
$y_u<^\sharp_\lex y_v$ $\Leftrightarrow$ $u<_\lex v$ and $y_u=y_v$ $\Leftrightarrow$ $u=v$.
\end{center}
Also, consider the ordering $<_\Lex$ induced by
$$x_{11}>\ldots>x_{1k_1}>\ldots>x_{n1}>\ldots>x_{nk_n}$$ on the
monomials of $\A^\alpha$ for $\alpha=(k_1,\ldots,k_n)$. Again, let
``$<^\sharp_\Lex$'' be a term order on the variables of
$\{y_{u'}:u'\in\A^\alpha\}$ in the following form:
\begin{center}
$y_{u'}<^\sharp_\Lex y_{v'}$ $\Leftrightarrow$ $u'<_\Lex v'$ and $y_{u'}=y_{v'}$ $\Leftrightarrow$ $u'=v'$.
\end{center}

\begin{lem}\label{labelling}
Let $\A$ be a finite set of monomials belonging to
$S=K[x_1,\ldots,x_n]$ and let $\alpha=(k_1,\ldots,k_n)\in\NN^n$. For
$\beta=\alpha+\epsilon_i$ there exists a $K$-algebra isomorphism
$$\varphi:K[(\A^\alpha)^\gamma]\rightarrow K[\A^\beta]$$
where $\gamma=\1+\epsilon_{ik_i}\in\NN^{|\alpha|}$. Here $\1$ is the vector in $\NN^{|\alpha|}$ with all components 1.
\end{lem}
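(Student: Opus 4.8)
The plan is to realize $\varphi$ as the $K$-algebra isomorphism induced by a relabeling of variables, after checking that this relabeling carries the monomial generators of $K[(\A^\alpha)^\gamma]$ exactly onto those of $K[\A^\beta]$.

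First I would set up the ambient rings and the variable bijection. Both $\gamma=\1+\epsilon_{ik_i}\in\NN^{|\alpha|}$ and $\beta=\alpha+\epsilon_i$ have modulus $|\alpha|+1$, so the two toric rings sit inside polynomial rings with the same number $|\alpha|+1$ of variables. The expansion of $S^\alpha$ with respect to $\gamma$ leaves every variable $x_{j\ell}$ with $(j,\ell)\neq(i,k_i)$ unchanged (up to a trivial renaming, since $\gamma$ has a $1$ there) and splits the single variable $x_{ik_i}$ into two new variables. I would define $\psi$ to send each unchanged variable to the corresponding $x_{j\ell}$ of $S^\beta$, and to send the two variables coming from $x_{ik_i}$ to $x_{ik_i}$ and $x_{i,k_i+1}$ of $S^\beta$. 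This $\psi$ is a bijection of the two variable sets, hence extends to a monomial-preserving $K$-algebra isomorphism $\bar\psi$ of the ambient polynomial rings.

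Next I would reduce the whole statement to the set equality $\bar\psi\big((\A^\alpha)^\gamma\big)=\A^\beta$ of the generating monomials. Granting this, $\bar\psi$ carries the generating set of $K[(\A^\alpha)^\gamma]$ onto the generating set of $K[\A^\beta]$, so its restriction is the desired isomorphism $\varphi$. Because $\A^\beta=\bigcup_{\x^\u\in\A}G((\x^\u)^\beta)$ and $(\A^\alpha)^\gamma=\bigcup_{\x^\u\in\A}\bigcup_{\x^\w\in G((\x^\u)^\alpha)}G((\x^\w)^\gamma)$, it suffices to prove, for each fixed $\x^\u\in\A$, that
$$\bar\psi\Big(\bigcup_{\x^\w\in G((\x^\u)^\alpha)}G((\x^\w)^\gamma)\Big)=G((\x^\u)^\beta).$$
Here I would use that $G((\x^\u)^\alpha)$ is the product, over the blocks $j=1,\ldots,n$, of the sets of all degree-$u(j)$ monomials in the $j$th block of variables; in particular every element has block-$j$ degree exactly $u(j)$, so no element divides another and all of them are genuine minimal generators (the same remark applies to the $\gamma$- and $\beta$-expansions). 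Since $\gamma$ touches only the $i$th block and only its last variable, the computation splits off block by block, and the blocks $j\neq i$ are matched identically by $\psi$.

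The heart of the argument, and the only real bookkeeping, is the single-block identity for $j=i$. Writing $c=u(i)$, the set $G((x_i^c)^\beta)$ consists of all degree-$c$ monomials in the $k_i+1$ variables $x_{i1},\ldots,x_{i,k_i+1}$. On the other side, running over $\x^\w\in G((x_i^c)^\alpha)$ distributes $c$ among $x_{i1},\ldots,x_{ik_i}$ in all possible ways, and then the $\gamma$-expansion further distributes the exponent of $x_{ik_i}$ among its two offspring variables in all possible ways; taking the union over all $\w$ and all such splittings yields precisely all degree-$c$ monomials in the $k_i-1$ surviving variables together with the two offspring variables, i.e. in $k_i+1$ variables. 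Under the relabeling $\psi$ this is exactly $G((x_i^c)^\beta)$, which closes the block-$i$ case and hence the set equality. Multiplying the per-block bijections and taking the union over $\x^\u\in\A$ finishes the proof; the expected obstacle is purely the careful indexing of the split variables, not any conceptual difficulty.
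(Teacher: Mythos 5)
Your proposal is correct and follows essentially the same route as the paper: the paper also defines the relabeling $x_{rst}\mapsto x_{rs}$ (for $t=1$) and $x_{ik_i2}\mapsto x_{i(k_i+1)}$ and declares the induced monomial map to be the desired isomorphism. Your block-by-block verification that the relabeling carries $(\A^\alpha)^\gamma$ onto $\A^\beta$ supplies the bookkeeping the paper leaves implicit, but it is the same argument.
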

\begin{proof}
For $\alpha$ we have
$[n]^\alpha=\{x_{11},\ldots,x_{1k_1},\ldots,x_{n1},\ldots,x_{nk_n}\}$.
Also,
$$([n]^\alpha)^\gamma=\{x_{111},\ldots,x_{1k_11},\ldots,x_{(i-1)k_{i-1}1},x_{ik_i1},x_{ik_i2},
x_{(i+1)k_{i+1}1},\ldots,x_{n11},\ldots,x_{nk_n1}\}.$$ Consider the
relabeling $\sigma:([n]^\alpha)^\gamma\rightarrow [n]^\beta$ by
$$\sigma(x_{rst})=\left\{
    \begin{array}{ll}
      x_{rs} & \mbox{if}\ t=1 \\
      x_{i(k_i+1)} & \mbox{if}\ t=2.
    \end{array}
  \right.
$$
Then the $K$-algebra homomorphism
$$\varphi:K[(\A^\alpha)^\gamma]\rightarrow K[\A^\beta]$$
defined by $\varphi(u)=\underset{x_{rst}|u}{\prod}\sigma(x_{rst})$
for all monomials $u\in(\A^\alpha)^\gamma$ is an isomorphism.
\end{proof}

\begin{thm}\label{Grob of exp}
Let $\A$ be a set of monomials belonging to $S=K[x_1,\ldots,x_n]$ and let $\alpha=\1+\epsilon_i\in\NN^n$.
If $\G_\A$ is a Gr\"{o}bner basis
of $I_\A$ with respect to a term order induced by $<^\sharp_{\lex}$
on $S_{\A}$, then the Gr\"{o}bner basis of $I_{\A^\alpha}$,
$\G_{\A^\alpha}$, with respect to the term order induced by
$<^\sharp_{\Lex}$ on $S_{\A^\alpha}$ is the union of the set
$$\G_0:=\{y_{u'}y_{v'}-y_{(u'/x_{i1})x_{i2}}y_{(v'/x_{i2})x_{i1}}:\ u',v'\in\A^\alpha\ \mbox{and}\ x_{i1}|u',x_{i2}|v'\}$$
and the set, call $\G_1$, containing all binomials
$\prod^r_ly_{u'_l}-\prod^s_ly_{v'_l}$ with the property that
$\prod^r_ly_{\pi(u'_l)}-\prod^s_ly_{\pi(v'_l)}\in\G_\A$ and $\prod^r_l
u'_l=\prod^s_l v'_l$ for $u'_l,v'_l\in\A^\alpha$.
\end{thm}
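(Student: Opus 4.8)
The plan is to establish the containment $\G_0\cup\G_1\subseteq I_{\A^\alpha}$ and then to prove that this set is a Gröbner basis for $<^\sharp_\Lex$ by showing that the $<^\sharp_\Lex$-leading monomial of every binomial of $I_{\A^\alpha}$ is divisible by $\inlLs(g)$ for some $g\in\G_0\cup\G_1$. The containment is routine: each element of $\G_0$ moves one $x_{i1}$ against one $x_{i2}$, so its two terms have the same image under $\varphi_{\A^\alpha}$, and each element of $\G_1$ satisfies $\prod u'_l=\prod v'_l$ by definition. Throughout I would work with the $K$-algebra homomorphism $\gamma\colon S_{\A^\alpha}\to S_\A$, $y_{u'}\mapsto y_{\pi(u')}$, which satisfies $\gamma(I_{\A^\alpha})=I_\A$ (the analogue of Lemma \ref{pi0}; only the inclusion $\gamma(f)\in I_\A$ is actually used). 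This lets me classify a binomial $f=\prod_l y_{u'_l}-\prod_l y_{v'_l}\in I_{\A^\alpha}$ according to whether $\gamma(f)=0$, the guiding principle being that $\G_0$ governs the \emph{vertical} relations (those with $\gamma(f)=0$, i.e. the two sides carry the same multiset of $\pi$-images) while $\G_1$ governs the \emph{horizontal} relations that descend to nonzero elements of $I_\A$.

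For the vertical case I would match the factors of the two sides by their common $\pi$-images; since $f\in I_{\A^\alpha}$ forces the total $x_{i1}$-degrees of the two sides to agree, there must be a factor of $\inlLs(f)$ that is $x_{i1}$-richer than its partner and another that is $x_{i1}$-poorer. This produces a valid swap, hence a nontrivial element of $\G_0$ whose leading term divides $\inlLs(f)$; the underlying fact that the fibrewise swap relations form a quadratic Gröbner basis I would confirm by checking that the $S$-polynomials of pairs drawn from $\G_0$ reduce to zero through further swaps (Buchberger's criterion on $\G_0$--$\G_0$ pairs). For the horizontal case, $\gamma(f)\in I_\A$ is nonzero, so $\inlls(\gamma(f))$ is divisible by $\inlls(g_\A)$ for some $g_\A\in\G_\A$; since $\G_1$ collects \emph{all} balanced lifts of the elements of $\G_\A$, I would select the lift of $g_\A$ whose leading factors are exactly those appearing in $\inlLs(f)$ and reduce, absorbing any discrepancy in the distribution of $x_{i1}$ and $x_{i2}$ by a preliminary sequence of $\G_0$-swaps. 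Finally, Lemma \ref{labelling} shows that this single-doubling statement ($\alpha=\1+\epsilon_i$) iterates to arbitrary expansions.

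The step I expect to be the main obstacle is the precise control of leading terms in the horizontal case. The orders $<^\sharp_\Lex$ and $<^\sharp_\lex$ are compatible under $\gamma$ only up to the vertical ambiguity between $x_{i1}$ and $x_{i2}$: there exist monomials $p',q'\in\A^\alpha$ with $\pi(p')>_\lex\pi(q')$ yet $p'<_\Lex q'$ (for instance $p'=x_{i2}x_{k1}$ and $q'=x_{i1}x_{m1}$ with $i<k<m$), so the $\gamma$-preimage of $\inlls(\gamma(f))$ need not equal $\inlLs(f)$, and the leading term of a lifted element of $\G_1$ need not map to the leading term of the corresponding element of $\G_\A$. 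Reconciling this requires choosing the balanced lift carefully and interleaving $\G_0$- and $\G_1$-reductions so that the $<^\sharp_\Lex$-leading monomial strictly decreases at every step and the process terminates; making this bookkeeping rigorous, rather than performing the individual reductions, is where the real work lies.
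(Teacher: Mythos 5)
Your plan follows essentially the same route as the paper's proof: reduce to showing every binomial of $I_{\A^\alpha}$ has $<^\sharp_\Lex$-leading term divisible by a leading term from $\G_0\cup\G_1$, split according to whether $\gamma(f)=0$ (handled by a swap from $\G_0$) or $\gamma(f)\neq 0$ (handled by lifting an element of $\G_\A$ to $\G_1$), and the difficulty you single out --- that $\inlLs(f')$ may lie over the \emph{non}-leading side of $\gamma(f')$ --- is exactly the subcase the paper resolves by its iterative reduction steps (1)--(3). The approach is correct and matches the paper's.
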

\begin{proof}
It is known that toric ideals are
binomial. To showing that $\G_0\cup\G_1$ is a Gr\"{o}bner basis of
$I_{\A^\alpha}$, suppose $0\neq f'=\prod^r_ly_{u'_l}-\prod^s_ly_{v'_l}\in
I_{\A^\alpha}$ and for all $l$ and $m$ with $l\neq m$, $u'_l\neq v'_m$. We want to show that $\inlLs(f')$ is divided by $\inlLs(g')$ for some $g'\in\G_0\cup\G_1$. Let $f:=\gamma(f')\in I_\A$. We have two cases:

\textbf{Case 1.} Assume that $f=0$. Then $r=s$ and we can assume that $\pi(u'_l)=\pi(v'_l)$ for
all $l$.

If $x_i\nmid \pi(u'_l)$, for some $l$, then
$u'_l=v'_l$ which contradicts the assumption. Thus $x_i|\pi(u'_l)$ for all $l$.

Assume that $\inlLs(f')=\prod^r_ly_{u'_l}$. Note that there are two distinct indexes $l_1$ and $l_2$ such that $x_{i1}|u'_{l_1}$ and $x_{i2}|u'_{l_2}$. Otherwise, we will have $u'_l=v'_l$ for all $l$, which is not true. Suppose that $u'_r\leq_\Lex\ldots\leq_\Lex u'_2\leq_\Lex u'_1$ and $l_1=1$. Furthermore, we can assume that $u'_{l_2}$ is a monomial such that $\nu_{x_{i2}}(v'_{l_2})<\nu_{x_{i2}}(u'_{l_2})$. Otherwise, since $\nu_{x_{i2}}(v'_1)>\nu_{x_{i2}}(u'_1)$, it follows that $\nu_{x_{i2}}(\prod^r_lu'_l)<\nu_{x_{i2}}(\prod^r_lv'_l)$, which is a contradiction.

Suppose that $l_2=2$. Set $h:=y_{u'_1}y_{u'_2}-y_{(u'_1/x_{i1})x_{i2}}y_{(u'_2/x_{i2})x_{i1}}$.
Then $h\in I_{\A^\alpha}$. It is clear that $(u'_1/x_{i1})x_{i2}<_\Lex u'_1$. Also, if $u'_1\leq_\Lex (u'_2/x_{i2})x_{i1}$, then using $(u'_2/x_{i2})x_{i1}\leq_\Lex v'_2$ we obtain $u'_1\leq_\Lex v'_2$ and so $\inlLs(f')=\prod^r_ly_{v'_l}$, a contradiction. Therefore $u'_1>_\Lex (u'_2/x_{i2})x_{i1}$. This implies that $\inlLs(h)=y_{u'_1}y_{u'_2}$. In particular, $\inlLs(h)|\inlLs(f')$ and $h\in\G_0$.

\textbf{Case 2.} Assume that $f\neq 0$.
Let $\pi(u'_l)=u_l$, $\pi(v'_l)=v_l$ and
$\inlls(f)=\prod^r_l y_{u_l}$. Since $f\in I_\A$, there exists $g=\prod^p_l y_{u_l}-\prod^q_l y_{w_l}\in\G_\A$ with $\inlls(g)=\prod^p_ly_{u_l}|\inlls(f)$.

If $\inlLs(f')=\prod^r_ly_{u'_l}$, then we can choose some monomials $w'_l$ from
$\A^\alpha$, with the property that $\pi(w'_l)=w_l$, $\prod^p_l u'_l=\prod^q_l w'_l$ and
$\inlLs(g')=\prod^p_l y_{u'_l}$. Set $g':=\prod^p_l y_{u'_l}-\prod^q_l y_{w'_l}$. Then $\inlLs(g')|\inlLs(f')$ and $g'\in\G_1$.

If $\inlLs(f')=\prod^s_ly_{v'_l}$, then it follows from $\inlls(f)=\prod^r_l y_{u_l}$ that there are
two (possibly equal) monomials $v'_h$ and $v'_k$ such that $x_{i1}|v'_h$
and $x_{i2}|v'_k$. More precisely, let $v'_s\leq_\Lex\ldots\leq_\Lex v'_2\leq_\Lex v'_1$ and
$u_r\leq_\lex \ldots\leq_\lex u_2\leq_\lex u_1$. Since $u_1\geq_\lex v_1$ and $v'_1\geq_\Lex u'_1$, it is clear that $x_i$ divides both $u_1$ and $v_1$. If for some $j<i$, $\nu_{x_j}(u_1)>\nu_{x_j}(v_1)$, then $u'_1>_\Lex v'_1$, which is false. Thus suppose that for all $j<i$, $\nu_{x_j}(u_1)=\nu_{x_j}(v_1)$. Also, it is easily to verify that $\nu_{x_i}(u_1)\geq\nu_{x_i}(v_1)$, $\nu_{x_{i1}}(v'_1)>\nu_{x_{i1}}(u'_1)$ and $\nu_{x_{i2}}(v'_1)<\nu_{x_{i2}}(u'_1)$. Especially, since $\prod^r_l u'_l=\prod^s_l v'_l$, it follows that there is some monomial, call $v'_k$, such that $x_{i2}|v'_k$.

If $v'_k$ has a property that $v'_1\geq_\Lex x_{i1}(v'_k/x_{i2})$, then by setting
$f'':=y_{v'_1}y_{v'_k}-y_{x_{i2}(v'_1/x_{i1})}y_{x_{i1}(v'_k/x_{i2})}$ we will have $\inlLs(f'')|\inlLs(f')$. Since $f''\in\G_0$, the assertion is completed. Hence assume that for every $v'_l$ with $l\geq 2$, if $x_{i2}|v'_l$ then $v'_1<_\Lex x_{i1}(v'_l/x_{i2})$. In particular, for such $v'_l$'s, we will have $\nu_{x_{i1}}(v'_l)=\nu_{x_{i1}}(v'_1)-1$ and $\nu_{x_{i2}}(v'_l)\geq\nu_{x_{i2}}(v'_1)+1$.

Since $f\in I_\A$, there exists $g=y_{u_{t_1}}\ldots y_{u_{t_p}}-y_{w_1}\ldots y_{w_q}\in\G_\A$ such that
$\inlls(g)=y_{u_{l_1}}\ldots y_{u_{t_p}}|\inlls(f)$.
Set $h:=\prod^s_ly_{v_l}-(y_{w_1}\ldots y_{w_q}\prod_{l\neq t_i}y_{u_l})$.
\begin{enumerate}[\upshape (1)]
  \item If $\prod^s_ly_{v_l}=y_{w_1}\ldots y_{w_q}\prod_{l\neq t_i}y_{u_l}$ then, by assuming
  $w_1=v_{s_1},\ldots,w_q=v_{s_q}$ and setting $g':=y_{u'_{t_1}}\ldots y_{u'_{t_p}}-y_{v'_{s_1}}\ldots y_{v'_{s_q}}$,
  we will have $\inlLs(g')|\inlLs(f')$ and especially $g'\in\G_1$.
  \item If $\prod^s_ly_{v_l}>^\sharp_\lex y_{w_1}\ldots y_{w_q}\prod_{l\neq t_i}y_{u_l}$, then set
  $h'=\prod^s_ly_{v'_l}-(y_{w'_1}\ldots y_{w'_q}\prod_{l\neq t_i}y_{u'_l})$. Thus there exists
  $g_0=y_{v_{s_1}}\ldots y_{v_{s_p}}-y_{z_1}\ldots y_{z_k}$ such that
  $\inlls(g_0)=y_{v_{s_1}}\ldots y_{v_{s_p}}|\inlls(h)=\inlls(f)$. Again set
  $g'_0=y_{v'_{s_1}}\ldots y_{v'_{s_p}}-y_{z'_1}\ldots y_{z'_k}$. Now
  $\inlls(g'_0)=y_{v'_{s_1}}\ldots y_{v'_{s_p}}|\inlls(f')$ and $g'_0\in \G_1$.
  \item If $\prod^s_ly_{v_l}<^\sharp_\lex y_{w_1}\ldots y_{w_q}\prod_{l\neq t_i}y_{u_l}$, then there is
  $g_1=y_{w_{h_1}}\ldots y_{w_{h_k}}\prod_{l_j\neq t_i}y_{u_{l_j}}-y_{z_1}\ldots y_{z_s}\in\G_\A$ such that
  $\inlls(g_1)=y_{w_{h_1}}\ldots y_{w_{h_k}}\prod_{l_j\neq t_i}y_{u_{l_j}}|\inlls(h)$. Set
  $$h_1:=\prod^s_ly_{v_l}-(y_{z_1}\ldots y_{z_s}\prod_{l\neq l_j,l\neq t_i}y_{u_l}\prod_{l\neq h_i}y_{w_l}).$$
  Now if
  $\prod^s_ly_{v_l}\geq^\sharp_\lex y_{z_1}\ldots y_{z_s}\prod_{l\neq l_j,l\neq t_i}y_{u_l}\prod_{l\neq h_i}y_{w_l}$,
  then by (1) and (2) the assertion is completed. Otherwise, we again go back to (3).
\end{enumerate}

Using the above procedure, after only finitely many steps, we obtain $g_k=\prod^a_ly_{v_{t_l}}-\prod^b_ly_{x_l}\in\G_\A$
with $\inlls(g_k)=\prod^a_ly_{v_{t_l}}$. Now set $g'_k:=\prod^a_ly_{v'_{t_l}}-\prod^b_ly_{x'_l}$. Then
$\inlLs(g'_k)|\inlLs(f')$ and $g'_k\in\G_1$, as desired.
\end{proof}

\begin{exam}
Consider a set
$$\A=\{x^2_1x_2,x_1x_2x_4,x_1x_3,x_2x^2_3,x_2x^2_4\}$$
of monomials belonging to $K[x_1,\ldots,x_4]$. Using CoCoA we obtain
$$\G_\A=\{y_{x^2_1x_2}y_{x_2x^2_4}-y^2_{x_1x_2x_4}\}.$$
Let $\alpha=(1,1,1,2)$. With the same notations of Theorem \ref{Grob of exp} we have
$\G_{\A^\alpha}=\G_0\cup\G_1$ where
$$\G_0= \{y_{x_1x_2x_{41}}y_{x_2x_{41}x_{42}}-y_{x_1x_2x_{42}}y_{x_2x^2_{41}},\
y_{x_2x^2_{41}}y_{x_2x^2_{42}}-y^2_{x_2x_{41}x_{42}},\
y_{x_1x_2x_{41}}y_{x_2x^2_{42}}-y_{x_1x_2x_{42}}y_{x_2x_{41}x_{42}}\}$$
and
$$\G_1= \{y_{x^2_1x_2}y_{x_2x^2_{41}}-y^2_{x_1x_2x_{41}},\
y_{x^2_1x_2}y_{x_2x^2_{42}}-y^2_{x_1x_2x_{42}},\
y_{x^2_1x_2}y_{x_2x_{41}x_{42}}-y_{x_1x_2x_{41}}y_{x_1x_2x_{42}}\}.$$
\end{exam}

Combining Proposition \ref{oHH Grob}, Lemma \ref{labelling} and Theorem \ref{Grob of exp} we have the following.

\begin{cor}
For a given $\alpha\in\NN^n$ and a set $\A$ of monomials in $S$, the Gr\"{o}bner basis of $I_\A$ is consists of quadratic binomials if and only if the Gr\"{o}bner basis of $I_{\A^\alpha}$ has a such construction.
\end{cor}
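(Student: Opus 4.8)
The plan is to treat the two implications separately: the ``if'' direction follows at once from the combinatorial pure subring machinery via Proposition \ref{oHH Grob}, while the ``only if'' direction is an induction on $|\alpha|$ built on Lemma \ref{labelling} and Theorem \ref{Grob of exp}. Throughout I read ``consists of quadratic binomials'' as a statement about the reduced Gröbner basis with respect to the orders $<^\sharp_\lex$ and $<^\sharp_\Lex$ appearing in those results; since the reduced Gröbner basis is obtained from any Gröbner basis by inter-reduction, it is quadratic precisely when some Gröbner basis is, so the two formulations agree.

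For the ``if'' direction, suppose the reduced Gröbner basis $\G$ of $I_{\A^\alpha}$ consists of quadratic binomials. By Lemma \ref{comb pure sub} and the remark following it, $K[\A]$ is a combinatorial pure subring of $K[\A^\alpha]$, so Proposition \ref{oHH Grob} gives that $\G\cap K[\A]$ is the reduced Gröbner basis of $I_\A$. Passing to this intersection only discards those elements of $\G$ that lie outside $S_\A$ and leaves the surviving binomials untouched, hence cannot raise degrees; therefore every element of $\G\cap K[\A]$ is again quadratic, and $I_\A$ has a quadratic Gröbner basis.

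For the ``only if'' direction I would induct on $|\alpha|-n=\sum_i(k_i-1)$. When $|\alpha|=n$ we have $\alpha=\1$, and $\A^\alpha$ is just $\A$ with each $x_i$ relabeled as $x_{i1}$, so $I_{\A^\alpha}\cong I_\A$ and the base case is immediate. For the inductive step, choose $i$ with $k_i\geq 2$ and set $\beta=\alpha-\epsilon_i$, so that $\alpha=\beta+\epsilon_i$ and $|\beta|<|\alpha|$. By the induction hypothesis $I_{\A^\beta}$ has a quadratic Gröbner basis $\G_{\A^\beta}$ with respect to the induced $<^\sharp_\lex$. Applying Theorem \ref{Grob of exp} to the set $\A^\beta$, viewed in its own polynomial ring on $|\beta|$ variables, with the single-coordinate expansion vector $\gamma=\1+\epsilon_j\in\NN^{|\beta|}$ furnished by Lemma \ref{labelling}, produces the Gröbner basis $\G_0\cup\G_1$ of $I_{(\A^\beta)^\gamma}$. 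The set $\G_0$ is quadratic by its very definition, and each binomial of $\G_1$ has the same number of factors on each side as its $\pi$-image in $\G_{\A^\beta}$, which is quadratic; hence $\G_1$ is quadratic too. Finally the $K$-algebra isomorphism $\varphi\colon K[(\A^\beta)^\gamma]\to K[\A^\alpha]$ of Lemma \ref{labelling} is merely a relabeling of variables, so it transports this quadratic Gröbner basis to a quadratic Gröbner basis of $I_{\A^\alpha}$, closing the induction.

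The step I expect to demand the most care is the compatibility of the term orders along the induction. Theorem \ref{Grob of exp} consumes a Gröbner basis taken with respect to $<^\sharp_\lex$ and returns one with respect to $<^\sharp_\Lex$, so I must check that the order $<^\sharp_\Lex$ produced at stage $\beta$ is exactly the order that $\varphi$ carries to the one playing the role of $<^\sharp_\lex$ at stage $\alpha$. Since all of these orders are defined by comparing the underlying monomials lexicographically against the fixed chain $x_{11}>\ldots>x_{nk_n}$, and the relabeling $\sigma$ of Lemma \ref{labelling} respects this chain, the orders do chain consistently; verifying this bookkeeping is the only genuinely technical point, the remainder being a direct assembly of the quoted results.
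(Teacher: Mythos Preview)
Your proposal is correct and follows the same route as the paper, which offers no proof beyond the one-line remark ``Combining Proposition~\ref{oHH Grob}, Lemma~\ref{labelling} and Theorem~\ref{Grob of exp} we have the following.'' You have simply unpacked that combination: the ``if'' direction via the combinatorial pure subring statement, and the ``only if'' direction by inducting on $|\alpha|-n$, peeling off one coordinate at a time with Lemma~\ref{labelling} and feeding the result into Theorem~\ref{Grob of exp}. Your observation that the relabeling $\sigma$ preserves the variable chain, so that the output order $<^\sharp_\Lex$ at one stage agrees with the input order $<^\sharp_\lex$ at the next, is exactly the bookkeeping needed to make the induction run and is left implicit in the paper.
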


\subsection*{Sortable sets}

The concept of sortable sets was introduced by Sturmfels \cite{St}. Let $\A$ be a set of monomials in $S$ and let $u,v\in\A$. Write $uv=x_{i_1}\ldots x_{i_{2d}}$ with
$x_{i_1}\leq x_{i_2}\leq\ldots\leq x_{i_{2d}}$. Set $u'=\prod^d_j
x_{2j-1}$ and $v'=\prod^d_j x_{2j}$. We define
$$\s:\A\times\A\rightarrow M_d\times M_d$$
with $\s((u,v))=(u',v')$ where $M_d$ denotes the set of all vectors
in $\ZZ^n$ of modulus $d$. $\A$ is called \emph{sortable}, if $\im(\s)\subseteq \A\times\A$.

For $\alpha\in\NN^n$, we denote by $M^\alpha_d$ the set of all
vectors in $\ZZ^{|\alpha|}$ of modulus $d$.  We define
$$\s^\alpha:\A^\alpha\times\A^\alpha\rightarrow M^\alpha_d\times M^\alpha_d$$
with $\s^\alpha((u,v))=(u',v')$

\begin{thm}\label{sort}
Let $\alpha\in\NN^n$. Let $\A=\{u_1,\ldots,u_m\}$ be a set of
monomials belonging to $S=K[x_1,\ldots,x_n]$. $\A$ is a sortable set
if and only if $\A^\alpha$ has a such property.
\end{thm}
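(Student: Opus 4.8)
The plan is to reduce both implications to a single compatibility property linking the two sorting operators through the projection $\pi:S^\alpha\rightarrow S$, $\pi(x_{ij})=x_i$. First I would record two elementary facts. The set-theoretic one: a monomial $w\in S^\alpha$ lies in $\A^\alpha$ if and only if $\pi(w)\in\A$. Indeed, for a single monomial $\x^\u\in\A$ we have $(\x^\u)^\alpha=P_1^{\u(1)}\cdots P_n^{\u(n)}$, whose minimal generators are exactly the monomials $w$ with $\sum_j\nu_{x_{ij}}(w)=\u(i)$ for every $i$, that is, the monomials with $\pi(w)=\x^\u$; taking the union over $\x^\u\in\A$ yields $\A^\alpha=\{w:\pi(w)\in\A\}$. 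The second fact is that $\pi$ preserves degree, so $\A^\alpha$ is again generated in the single degree $d$ in which $\A$ is (recall $K[\A]$ is homogeneous), and hence $\s^\alpha$ is well defined.

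The crux, and the step I expect to require the most care, is the claim that $\pi$ intertwines the sorting operators: writing $(w_1',w_2')=\s^\alpha((w_1,w_2))$ and $(u',v')=\s((\pi(w_1),\pi(w_2)))$, one has $\pi(w_1')=u'$ and $\pi(w_2')=v'$. The key observation is that the variable order $x_{11}>\cdots>x_{1k_1}>\cdots>x_{n1}>\cdots>x_{nk_n}$ on $S^\alpha$ projects under $\pi$ onto the order $x_1>\cdots>x_n$ on $S$: if $x_{ij}\ge x_{i'j'}$ in $S^\alpha$ then $i\le i'$, so $x_i\ge x_{i'}$ in $S$. Consequently, if $w_1w_2=y_1\cdots y_{2d}$ with $y_1\le\cdots\le y_{2d}$ in $S^\alpha$, then $\pi(y_1)\le\cdots\le\pi(y_{2d})$ is already a weakly increasing arrangement of $\pi(w_1)\pi(w_2)=\pi(w_1w_2)$, hence is its sorted form. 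Therefore the odd-position product satisfies $\pi(w_1')=\pi(y_1y_3\cdots y_{2d-1})=\pi(y_1)\pi(y_3)\cdots=u'$, and likewise $\pi(w_2')=v'$. The only subtlety here is making sure the $x_{ij}$ with fixed $i$ are consecutive in the chosen order (which is what makes the projection order-preserving) and that the $\pi$-image of the sorted word is genuinely the sorted image-word; the rest is bookkeeping.

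With the intertwining in hand both directions are short. For the ``only if'' part, assume $\A$ is sortable and take $w_1,w_2\in\A^\alpha$; then $\pi(w_1),\pi(w_2)\in\A$, so $(u',v')=\s((\pi(w_1),\pi(w_2)))\in\A\times\A$, and the intertwining gives $\pi(w_1')=u'\in\A$, $\pi(w_2')=v'\in\A$. By the set-theoretic description $w_1',w_2'\in\A^\alpha$, so $\im(\s^\alpha)\subseteq\A^\alpha\times\A^\alpha$. For the ``if'' part, assume $\A^\alpha$ is sortable and take $u=\prod_i x_i^{a_i},\,v=\prod_i x_i^{b_i}\in\A$. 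I would lift them to the first-slot monomials $\tilde u=\prod_i x_{i1}^{a_i}$ and $\tilde v=\prod_i x_{i1}^{b_i}$, which lie in $\A^\alpha$ because $\pi(\tilde u)=u$ and $\pi(\tilde v)=v$. Sorting gives $(\tilde u',\tilde v')=\s^\alpha((\tilde u,\tilde v))\in\A^\alpha\times\A^\alpha$, and the intertwining yields $\pi(\tilde u')=u'$, $\pi(\tilde v')=v'$ where $(u',v')=\s((u,v))$. Since $\tilde u',\tilde v'\in\A^\alpha$, their $\pi$-images $u'$ and $v'$ lie in $\A$, proving $\A$ sortable.
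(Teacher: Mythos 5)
Your proof is correct and follows essentially the same route as the paper: both reduce the statement to the facts that $w\in\A^\alpha$ iff $\pi(w)\in\A$ and that $\pi$ intertwines $\s^\alpha$ with $\s$, and both handle the converse by lifting to the first-slot monomials $\prod_i x_{i1}^{a_i}$. The only difference is that the paper dismisses the intertwining identity $\s((\pi(u),\pi(v)))=(\pi(u'),\pi(v'))$ as ``easy to see,'' whereas you supply the justification via the order-preserving projection of the variable ordering.
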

\begin{proof}
Let $(u',v')\in\im(\s^\alpha)$. Then there exists
$(u,v)\in\A^\alpha\times\A^\alpha$ such that
$\s^\alpha((u,v))=(u',v')$. It is easy to see that
$\s((\pi(u),\pi(v)))=(\pi(u'),\pi(v'))$. Now since $\A$ is
sortable, we have that $(\pi(u'),\pi(v'))\in\A\times\A$.
Therefore $(u',v')\in\A^\alpha\times\A^\alpha$.

For the converse direction, suppose that $(u'_0,v'_0)\in\im(\s)$. Then there
exists $(u_0,v_0)\in\A\times\A$ such that
$\s((u_0,v_0))=(u'_0,v'_0)$. Set $u=\prod_{x_i|u_0} x_{1i}$ and
$v=\prod_{x_i|v_0} x_{1i}$. It is clear that
$\s^\alpha((u,v))=(u',v')$ which $u'=\prod_{x_i|u'_{1i}} x_{1i}$ and
$v'=\prod_{x_i|v'_0} x_{1i}$. Now since $\A^\alpha$ is sortable,
so $(u',v')\in\A^\alpha\times\A^\alpha$. This implies that
$(u'_0,v'_0)\in\A\times\A$, as desired.
\end{proof}

By a result due to Sturmfels \cite{St}, toric ideal associated to a
sortable set $\A$ has a Gr\"{o}bner basis consisting of the sorting
relations $y_uy_v-y_{u'}y_{v'}$ with $u,v\in\A$ and
$(u',v')=\s((u,v))$. This result together with Theorem \ref{sort} follows that:

\begin{cor}
Let $\alpha\in\NN^n$ and let $\A$ be a set of monomials belonging to
$S$. If $\A$ (resp. $\A^\alpha$) is sortable then $I_{\A^\alpha}$
(resp. $I_{\A}$) has a Gr\"{o}bner basis consisting of the quadratic
sorting relations.
\end{cor}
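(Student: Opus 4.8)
The plan is to obtain this statement as a purely formal consequence of two facts that are already in place: Theorem~\ref{sort}, which asserts that $\A$ is sortable if and only if $\A^\alpha$ is sortable, and the theorem of Sturmfels~\cite{St} recalled immediately before the corollary, which guarantees that the toric ideal of \emph{any} sortable set has a Gr\"obner basis consisting of the quadratic sorting relations. Since all of the combinatorial work has already been carried out in Theorem~\ref{sort}, no new computation is needed here; the corollary is simply the composition of these two implications, read off in each of the two directions.

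First I would handle the direction in which $\A$ is assumed sortable. Theorem~\ref{sort} then gives at once that $\A^\alpha$ is sortable, and applying Sturmfels' theorem to the sortable set $\A^\alpha$ produces a Gr\"obner basis of $I_{\A^\alpha}$ whose elements are exactly the relations $y_{u}y_{v}-y_{u'}y_{v'}$ with $u,v\in\A^\alpha$ and $(u',v')=\s^\alpha((u,v))$, each of which is quadratic by construction. For the parenthetical direction I would argue symmetrically: if $\A^\alpha$ is sortable, the same equivalence in Theorem~\ref{sort} yields that $\A$ is sortable, and Sturmfels' theorem applied to $\A$ now supplies a Gr\"obner basis of $I_\A$ consisting of the quadratic sorting relations $y_{u}y_{v}-y_{u'}y_{v'}$ with $u,v\in\A$ and $(u',v')=\s((u,v))$. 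This settles both cases.

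As there is no genuine obstacle, the only thing demanding a moment of care is the bookkeeping: in each direction one must apply Sturmfels' theorem to the set that is known to be sortable, and correspondingly use the sorting operator $\s$ over the variables $\{y_{u}:u\in\A\}$ in one case and $\s^\alpha$ over $\{y_{u'}:u'\in\A^\alpha\}$ in the other, so that the quadratic sorting relations are expressed over the correct polynomial ring $S_\A$ or $S_{\A^\alpha}$.
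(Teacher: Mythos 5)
Your proposal is correct and is exactly the argument the paper intends: the corollary is stated as an immediate consequence of Theorem~\ref{sort} combined with Sturmfels' result on sortable sets, applied in each direction. No further comment is needed.
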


\subsection*{Normalness}

For $\A=\{\x^{\u_1},\ldots,\x^{\u_m}\}$ a set of monomials belonging to $S$, we set
$\bar{\A}:=\log_\x(\A)=\{\u_1,\ldots,\u_m\}\subset\ZZ^n_+$.

\begin{thm}\label{normal}
Let $\alpha\in\NN^n$. Let $\A=\{u_1,\ldots,u_m\}$ be a set of
monomials belonging to $S=K[x_1,\ldots,x_n]$. Then $K[\A]$ is a
normal ring if and only if $K[\A^\alpha]$ has this property.
\end{thm}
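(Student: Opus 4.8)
The plan is to translate the statement into the language of affine semigroups and to reduce everything to the single combinatorial map that sums the coordinates of each block. Recall that for a set of monomials $\mathcal{B}$ the ring $K[\mathcal{B}]$ is normal precisely when the affine semigroup $\NN\bar{\mathcal{B}}$ is saturated, i.e. $\NN\bar{\mathcal{B}}=\ZZ\bar{\mathcal{B}}\cap\RR_{\geq 0}\bar{\mathcal{B}}$, where $\bar{\mathcal{B}}=\log_\x(\mathcal{B})$ and $\RR_{\geq 0}\bar{\mathcal{B}}$, $\ZZ\bar{\mathcal{B}}$ denote the cone and the group it generates. After discarding any variable dividing no element of $\A$ (which changes neither ring), I would reuse the block-sum map $\pi_0\colon\ZZ^{|\alpha|}\to\ZZ^n$ from Subsection 2.1 (the one introduced before Lemma~\ref{pi0}), whose $j$-th coordinate adds up the entries of the block $x_{j1},\dots,x_{jk_j}$. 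The whole proof rests on showing that the cone, the group, and the semigroup attached to $\overline{\A^\alpha}$ are all governed by those of $\bar\A$ through $\pi_0$.

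First I would record the two ``linear'' identities, which hold unconditionally: $\ZZ\overline{\A^\alpha}=\pi_0^{-1}(\ZZ\bar\A)$ and $\RR_{\geq 0}\overline{\A^\alpha}=\{\w\in\RR^{|\alpha|}_{\geq 0}:\pi_0(\w)\in\RR_{\geq 0}\bar\A\}$. For the group identity one observes that whenever a variable $x_j$ occurs in some $\u\in\bar\A$, two lifts of $\u$ differing by moving one unit of exponent from $x_{jl'}$ to $x_{jl}$ give $\epsilon_{jl}-\epsilon_{jl'}\in\ZZ\overline{\A^\alpha}$; these differences span the sum-zero sublattice of each block, which together with one fixed lift of each generator produces all of $\pi_0^{-1}(\ZZ\bar\A)$. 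For the cone identity the inclusion $\subseteq$ is immediate, since $\pi_0$ sends every generator of $\A^\alpha$ to its underlying $\u\in\bar\A$; for $\supseteq$ I would split a vector $\w$ lying over $\sum_i a_i\u_i$ into blockwise pieces over each $\u_i$ and use that the lifts of a single $\u_i$ are exactly the lattice points of the product of dilated simplices $\prod_j \u_i(j)\Delta^{(j)}$, so that their cone is $\{\w\geq 0:\sum_l\w_{jl}=t\,\u_i(j)\ \forall j\}$.

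The heart of the argument, and the step I expect to be the main obstacle, is the integral analogue $\NN\overline{\A^\alpha}=\{\w\in\ZZ^{|\alpha|}_+:\pi_0(\w)\in\NN\bar\A\}$. Here the real cone decomposition is not enough: I must realise an integer vector $\w$ with $\pi_0(\w)=\sum_i a_i\u_i$ and $a_i\in\NN$ as a genuine nonnegative integer combination of generators. Since a lift of $\u_i$ may use an independently chosen degree-$\u_i(j)$ monomial in each block, the blocks decouple, and inside a fixed block $j$ the task becomes to distribute the prescribed column totals $\w_{j1},\dots,\w_{jk_j}$ among $\sum_i a_i$ bins whose row totals are the degrees $\u_i(j)$, each repeated $a_i$ times. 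This is exactly an integer transportation problem with matching margins, whose constraint matrix is the totally unimodular incidence matrix of a complete bipartite graph; hence an integral solution exists, and reassembling the chosen monomials across blocks exhibits $\w$ in $\NN\overline{\A^\alpha}$.

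Finally I would combine the three identities. Intersecting the group and cone descriptions yields $\ZZ\overline{\A^\alpha}\cap\RR_{\geq 0}\overline{\A^\alpha}=\{\w\in\ZZ^{|\alpha|}_+:\pi_0(\w)\in\ZZ\bar\A\cap\RR_{\geq 0}\bar\A\}$, while the semigroup identity gives $\NN\overline{\A^\alpha}=\{\w\in\ZZ^{|\alpha|}_+:\pi_0(\w)\in\NN\bar\A\}$. Because $\pi_0$ maps $\ZZ^{|\alpha|}_+$ onto $\ZZ^n_+$ (lift a vector by placing its mass in the first coordinate of each block), these two sets coincide if and only if $\NN\bar\A=\ZZ\bar\A\cap\RR_{\geq 0}\bar\A$; that is, $K[\A^\alpha]$ is normal if and only if $K[\A]$ is. As a check, the implication ``$K[\A^\alpha]$ normal $\Rightarrow K[\A]$ normal'' also follows at once from the remark after Lemma~\ref{comb pure sub}, since $K[\A]$ is a combinatorial pure subring of $K[\A^\alpha]$; the real content of the theorem is the converse, for which the transportation argument above is the decisive ingredient.
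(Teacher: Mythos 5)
Your proposal is correct and follows essentially the same route as the paper: both reduce the statement to normality of the associated affine semigroups via \cite[Theorem 6.1.4]{BrHe} and transport everything through the block-sum map $\pi_0$, with the easy direction also available from the combinatorial pure subring remark. The only substantive difference is that the decisive step --- that a vector $\w\in\ZZ^{|\alpha|}_+$ with $\pi_0(\w)\in\NN\bar{\A}$ actually lies in $\NN\overline{\A^\alpha}$ --- is asserted in the paper in one sentence (``Now $u_i=\sum_j u_{ij}$ implies that $\u\in(\bar{\A})^\alpha$''), whereas you justify it by the blockwise transportation argument with matching margins; your write-up is therefore more complete exactly where the paper's proof is thinnest.
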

\begin{proof}
By Theorem 6.1.4 of \cite{BrHe}, $K[\A]$ is normal if and only
$\bar{\A}$ is a normal affine semigroup, i.e. if $t\u\in\bar{\A}$
for some $\u\in\ZZ\bar{\A}$ and $t\in\NN$, then $\u\in\bar{\A}$.
Here $\ZZ\bar{\A}$ means a smallest group containing $\bar{\A}$.
Thus it suffices to show that $(\bar{\A})^\alpha$ is a normal
semigroup when $\bar{\A}$ is normal.

``Only if part'': Let $t\u\in (\bar{\A})^\alpha$ for some
$\u\in\ZZ (\bar{\A})^\alpha$ and $t\in\NN$. It follows that
$t\pi_0(\u)\in\bar{\A}$ and $\pi_0(\u)\in\ZZ\bar{\A}$. Let
$\u=(u_{11},\ldots,u_{1k_1},\ldots,u_{n1},\ldots,u_{nk_n})$. Since $\bar{\A}$ is normal, we conclude that $\pi_0(\u)\in\bar{\A}$.
Let $\pi_0(\u)=(u_1,\ldots,u_n)$. Since
$\u\in\ZZ (\bar{\A})^\alpha$, every component of $\u$ is
integer. Now $u_i=\sum^{k_i}_{j=1}u_{ij}$ implies that $\u\in (\bar{\A})^\alpha$, as desired.

``If part'': Let $t\u\in\bar{\A}$ for some $\u\in\ZZ\bar{\A}$ and
$t\in\NN$. Set $\u':=\sum^n_{i}\u(i)\epsilon_{i1}$. It is clear that
$t\u'\in(\bar{\A})^\alpha$ and so $\u'\in\ZZ(\bar{\A})^\alpha$. In
particular, $\u=\pi_0(\u')\in\bar{\A}$.
\end{proof}

\begin{rem}
The if part of Theorem \ref{normal} is a straightforward consequence
of Proposition 1.2 of \cite{OhHeHi} and Lemma \ref{comb pure sub}.
\end{rem}

\subsection*{Some homological relations}

Corollary 2.5 of \cite{OhHeHi} together with Lemma \ref{comb pure
sub} obtain the following result.

\begin{prop}\label{subring CM}
Let $\alpha\in\NN^n$ and let $\A=\{u_1,\ldots,u_m\}$ be a set of
monomials belonging to $S=K[x_1,\ldots,x_n]$. For the graded Betti
numbers of $I_{\A^\alpha}$ and $I_\A$ we have
$$\beta^{K[\A]}_{ij}(I_\A)\leq\beta^{K[\A^\alpha]}_{ij}(I_{\A^\alpha})\quad\mbox{for all}\ i\ \mbox{and}\ j.$$
\end{prop}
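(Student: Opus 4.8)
The plan is to deduce the inequality from the general Betti-number comparison for combinatorial pure subrings, namely \cite[Corollary 2.5]{OhHeHi}, once $K[\A]$ has been exhibited as a combinatorial pure subring of $K[\A^\alpha]$. Thus the whole argument is a reduction: no resolution need be written down by hand, and the work consists entirely in setting up the hypotheses of the cited corollary.

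First I would make the combinatorial pure subring identification explicit, which is essentially the content of the remark following Lemma \ref{comb pure sub}. Taking $\beta=\1\in\NN^n$ in that lemma, the all-ones expansion $\A^{\1}$ is identified with $\A$ under the relabeling $x_{i1}\mapsto x_i$ (each $(\x^\u)^{\1}$ is the principal monomial $x_{11}^{\u(1)}\cdots x_{n1}^{\u(n)}$), and with $T=\{x_{11},x_{21},\ldots,x_{n1}\}\subseteq [n]^\alpha$ one has $(\A^\alpha)_T=\A^{\1}\cong\A$. Hence $K[\A]=K[(\A^\alpha)_T]$ is a combinatorial pure subring of $K[\A^\alpha]$, and the induced inclusion $S_\A\hookrightarrow S_{\A^\alpha}$ of polynomial rings (sending each $y_{u_i}$ to the variable indexed by the corresponding monomial of $(\A^\alpha)_T$) satisfies $I_\A=I_{\A^\alpha}\cap S_\A$.

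With this identification in hand, \cite[Corollary 2.5]{OhHeHi} applies directly with ambient ring $K[\A^\alpha]$ and pure subring $K[\A]$, and yields $\beta^{K[\A]}_{ij}(I_\A)\le\beta^{K[\A^\alpha]}_{ij}(I_{\A^\alpha})$ for all $i$ and $j$, which is the assertion. The mild point one must check — and which I regard as the only real obstacle — is the grading bookkeeping that makes the comparison index-by-index rather than a comparison of total Betti numbers: one has to confirm that the $\ZZ^{|\alpha|}$-grading on $S_{\A^\alpha}$ restricts along $S_\A\hookrightarrow S_{\A^\alpha}$ to exactly the grading used to define $\beta^{K[\A]}_{ij}(I_\A)$ (each $y_{u_i}$ and each $y_{u'}$ having degree $1$), so that the multigraded minimal free resolution of $I_\A$ sits compatibly inside that of $I_{\A^\alpha}$. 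This compatibility is immediate once the identification $(\A^\alpha)_T=\A^{\1}\cong\A$ of graded sets from the previous step is recorded, and the inequality follows.
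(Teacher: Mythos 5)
Your argument is exactly the paper's: the authors also deduce the inequality by combining Lemma \ref{comb pure sub} (with $\beta=\1$, so that $K[\A]\cong K[\A^{\1}]=K[(\A^\alpha)_T]$ is a combinatorial pure subring of $K[\A^\alpha]$) with Corollary 2.5 of \cite{OhHeHi}. Your additional remarks on the grading bookkeeping only make explicit what the paper leaves implicit, so the proposal is correct and takes essentially the same approach.
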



\begin{thm}\label{subring dim}
Let $\alpha\in\NN^n$ and let $\A=\{u_1,\ldots,u_m\}$ be a set of
monomials belonging to $S=K[x_1,\ldots,x_n]$. Then the following hold:
\begin{enumerate}[\upshape (a)]
  \item $\dim(K[\A])\leq\dim(K[\A^\alpha])$;
  \item $\depth(K[\A])\leq\depth(K[\A^\alpha])$;
  \item $\pd(K[\A])\leq\pd(K[\A^\alpha])$;
  \item $\reg(K[\A])\leq\reg(K[\A^\alpha])$.
\end{enumerate}
\end{thm}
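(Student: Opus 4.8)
The plan is to establish each of the four inequalities by exploiting the combinatorial pure subring structure already proven in Lemma \ref{comb pure sub}, namely that $K[\A]$ is a combinatorial pure subring of $K[\A^\alpha]$, together with the explicit semigroup-theoretic description of the expansion. For part (a), I would compute Krull dimensions via the ranks of the associated affine semigroups: $\dim(K[\A])=\rank(\ZZ\bar{\A})$ and $\dim(K[\A^\alpha])=\rank(\ZZ(\bar{\A})^\alpha)$. Using the surjection $\pi_0:\ZZ^{|\alpha|}\rightarrow\ZZ^n$ restricted to $(\bar{\A})^\alpha$, which maps onto $\bar{\A}$, one sees that $\pi_0$ induces a surjection $\ZZ(\bar{\A})^\alpha\twoheadrightarrow\ZZ\bar{\A}$, so $\rank(\ZZ(\bar{\A})^\alpha)\geq\rank(\ZZ\bar{\A})$, giving the inequality (a).

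For part (b), I would use the fact that a combinatorial pure subring is in particular an algebra retract: the $K$-algebra epimorphism $\tau$ (or its analogue for $\A$ in place of $P$) splits the inclusion $K[\A]\hookrightarrow K[\A^\alpha]$, since the embedding identifies $\A$ with the monomials of $\A^\alpha$ supported on the variables $x_{j1}$, and collapsing back recovers $K[\A]$. For an algebra retract $K[\A]\hookrightarrow K[\A^\alpha]$, depth behaves monotonically, which yields $\depth(K[\A])\leq\depth(K[\A^\alpha])$; this is a standard consequence of the retract splitting the identity on $K[\A]$ and the Auslander--Buchsbaum-type behavior of depth under such maps.

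For parts (c) and (d), I would read them off from the graded Betti number comparison already recorded in Proposition \ref{subring CM}, which states $\beta^{K[\A]}_{ij}(I_\A)\leq\beta^{K[\A^\alpha]}_{ij}(I_{\A^\alpha})$ for all $i,j$. Projective dimension is the largest homological index $i$ for which some $\beta_{ij}$ is nonzero, and Castelnuovo--Mumford regularity is the supremum of $j-i$ over nonvanishing $\beta_{ij}$; since every Betti number on the left is bounded by the corresponding Betti number on the right, any index realizing a nonzero Betti number for $I_\A$ also realizes one for $I_{\A^\alpha}$. Hence both the projective dimension and the regularity of $I_\A$ (equivalently of $K[\A]$, up to the standard shift) are at most those of $I_{\A^\alpha}$, giving (c) and (d).

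I expect the main obstacle to lie in part (b): while the inequalities for dimension, projective dimension, and regularity follow almost formally from rank considerations and the Betti comparison, the depth inequality requires genuinely invoking that the combinatorial pure subring inclusion is a \emph{retract} and that depth is monotone along retracts. One must be careful that the relevant depth is taken with respect to the appropriate graded maximal ideals and that the splitting map $\tau$ restricts correctly; the cleanest route is to cite Proposition 1.2 of \cite{OhHeHi}, where the retract property of combinatorial pure subrings is established, and combine it with the known behavior of depth under algebra retracts. Once that structural fact is in hand, the remaining verifications are routine.
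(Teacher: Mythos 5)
Your parts (a), (c) and (d) follow the paper's own route: (a) is the rank comparison $\rank(\A)\leq\rank(\A^\alpha)$ together with $\dim(K[\A])=\rank(\A)$ (your observation that $\pi_0$ induces a surjection $\ZZ(\bar{\A})^\alpha\twoheadrightarrow\ZZ\bar{\A}$ is a clean way to justify the rank inequality), and (c), (d) are read off from the Betti-number comparison of Proposition \ref{subring CM} exactly as the paper does.

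Part (b), however, contains a genuine gap. You reduce the depth inequality to the assertion that depth is monotone along algebra retracts, but this is not a standard fact you can invoke --- it is false in general. For instance, $B=K[x,y]$ is an algebra retract of $A=K[x,y,z]/(xz,yz)$ via the inclusion and the projection $A\to A/(z)\cong B$, yet $\depth(B)=2$ while $\depth(A)=1$. So the retract structure alone cannot prove (b); whatever makes the inequality true must use the specific combinatorial-pure-subring (toric) structure. The heuristic behind your claim also breaks down here: writing $K[\A^\alpha]=K[\A]\oplus\ker\tau$ as $K[\A]$-modules only controls depth with respect to the graded maximal ideal of $K[\A]$, and since $K[\A^\alpha]$ is not a finitely generated $K[\A]$-module (already $K[x_{11},x_{12}]$ over $K[x_{11}]$ when $\A=\{x_1\}$, $\alpha=(2)$), this says nothing about depth with respect to the graded maximal ideal of $K[\A^\alpha]$. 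Nor can you get (b) from Proposition \ref{subring CM} via Auslander--Buchsbaum: that would require the \emph{upper} bound $\pd(K[\A^\alpha])\leq\pd(K[\A])+(|\A^\alpha|-|\A|)$, whereas the Betti-number inequality only bounds $\pd(K[\A^\alpha])$ from below. Finally, your citation is off: Proposition 1.2 of \cite{OhHeHi} is the normality statement (this is how the paper itself uses it in the remark after Theorem \ref{normal}). The paper's proof of (b) instead cites Proposition 2.4 of \cite{OhHeHi}, a result specific to combinatorial pure subrings, combined with the Auslander--Buchsbaum formula (Theorem A.3.4 of \cite{HeHi}) and Lemma \ref{comb pure sub}; you need either that specific result or an actual argument tailored to this extension.
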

\begin{proof}
(a) It is easily seen that $\rank(\A)\leq\rank(\A^\alpha)$. On the
other hand, it is known that $\dim(K[\A])=\rank(\A)$ (See \cite[Chapter 6]{BrHe}).
Therefore the desired equality follows immediately.

(b) It is a straightforward consequence of Proposition 2.4 of
\cite{OhHeHi}, Theorem A.3.4 of \cite{HeHi} and Lemma \ref{comb pure
sub}.

(c) and (d) follows from Proposition \ref{subring CM}.
\end{proof}

\begin{rem}
One may ask that for a set $\A$ of monomials if the toric ring
$K[\A]$ is Cohen-Macaulay, is $K[\A^\alpha]$ Cohen-Macaulay, too?
The answer is negative. For instance, consider
$\A=\{x^3_1,x^2_1x_2,x^3_2\}\subset K[x_1,x_2]$ and
$\alpha=(2,2)\in\NN^2$. By using CoCoA, we see that $K[\A]$ is a
Cohen-Macaulay ring but $K[\A^\alpha]$ is not. $K[\A]$ is of
dimension 2, while the dimension and depth of $K[\A^\alpha]$ are,
respectively, 4 and 3.
\end{rem}

\section*{Acknowledgment}

The authors would like to thank the referee for a careful reading of this
note and for valuable comments and corrections. The research of Rahim Rahmati-Asghar was in part supported by a grant from IPM (No. 92130029). The research of Siamak Yassemi was in part supported by a grant from the University of Tehran (No. 6103023/1/014)


\end{document}